\documentclass[a4paper,12pt]{article}

\usepackage[english]{babel}
\usepackage[T1]{fontenc}
\usepackage{amssymb}
\usepackage{amsmath}
\usepackage{amsthm}
\usepackage{mathrsfs}
\usepackage[all]{xy}


\usepackage{sectsty}


\sectionfont{\fontsize{14}{15}\selectfont}

\theoremstyle{thmit} 
\newtheorem{thm}{Theorem}[section]
\newtheorem{lem}[thm]{Lemma}
\newtheorem{cor}[thm]{Corollary}
\newtheorem{prop}[thm]{Proposition}
\newtheorem{question}{Question}
\newtheorem{deff}[thm]{Definition}
\newcommand{\Z}{\mathbb{Z}}

\newcommand{\A}{{\cal A}}

\newcommand{\Oe}{\widehat{OE}}
\newcommand{\B}{{\cal B}}

\newcommand{\G}{{\cal G}}


\newtheorem{rem}[thm]{Remark}

\usepackage{amssymb}
\usepackage{amsmath}
\usepackage{amsthm}
\usepackage{mathrsfs}
\usepackage[all]{xy}
\usepackage{graphicx,color}

\title{The profinite completion of accessible groups.}
\author{Vagner R. de Bessa, \ Anderson L. P. Porto and \ Pavel A. Zalesskii}


\begin{document}

\maketitle

\begin{abstract} We introduce a class $\A$ of finitely generated residually finite accessible groups with some natural restriction on one-ended vertex groups in their JSJ-decompositions.  We prove that the profinite completion of  groups in $\A$  almost detects its JSJ-decomposition and compute the genus of free products  of groups in $\A$. 
\end{abstract}

\section{Introduction}

There has been much recent study of whether residually finite groups, or classes of residually finite groups of combinatorial nature may be distinguished from each other by their sets of finite quotient groups.

In group theory the study in this direction started in 70-th of the last century when Baumslag [Bau74],  Stebe [Ste72] and others found examples of non-isomorphic finitely generated residually finite groups having the same set of finite quotients. The general question  addressed in this study can be formulated as follows: 
		
		\begin{question}\label{question}
			To what extent a finitely generated residually finite group $\Gamma$ is  determined by its finite quotients?
		\end{question} 
		
		The study leaded to the notion of genus $\mathfrak{g}(G)$ of a finitely generated residually finite group $G$, the set of isomorphism classes of finitely generated residually finite groups having the same set of finite quotients as $G$. Equivalently, $\mathfrak{g}(G)$ is the set of isomorphism classes of finitely generated residually finite groups having the  profinite completion isomorphic to the profinite completion $\widehat G$ of $G$. In fact, the term genus was  borrowed from integral representation theory, where for a finite group $K$  the genus of a $\Z K$-lattice $M$ is defined as the set of isomorphism classes of $\Z K$-lattices $N$ such that the $\widehat \Z K$-modules $\widehat M$ and $\widehat N$ are isomorphic.    
		
		The study mostly was concentrated to establish whether the cardinality $g(G)$ of  the  genus $\mathfrak{g}(G)$ is finite or  1 (see  \cite{GPS, GS, GZ, BZ, BCR16, BMRS18, BMRS20, Wil17} for example) (we use the same term {\it genus} for $g(G)$ from now on). 
However, the following 		
		 principle question of Remeslennikov is still open.
		
		\begin{question} (V.N. Remeslennikov)
		Is the genus $g(F)$ of a free group $F$ of finite rank equal to 1?
		
		\end{question}

Note that this question is easily reduced to the question whether the profinite completion of finitely generated residually finite one-ended group can be free profinite. In particular, it is not clear whether the profinite completion of an one-ended group does not split as a profinite amalgamated free product or an HNN-extension over a finite group. This naturally gives restriction to the family in which we make our considerations.

We define the family $\A$ to consist of all finitely generated residually finite accessible groups $G$ such that if $G$ splits as an amalgamated free product $G=G_1*_HG_2$ or an HNN-extension $G=HNN(G_1,H,t)$ over a finite group $H$ then $G_i\in \A$ and if $G\in \A$ is one-ended, then $\widehat G$ can not act on a profinite tree with finite edge stabilizers without a global fixed point (see Definition \ref{Profinite tree} for the definition of a profinite tree). Note that $\A$ is closed for free products with finite amalgamation and HNN-extensions with finite associated subgroups (Proposition \ref{closed for constructions}). It contains all finitely generated residually finite soluble groups (and in general groups satifying a law), Fuchsian groups, 3-manifold groups as well all arithmetic groups of rank $\geq 2$, Fab groups including Grigorchuk and Gupta-Sidki type groups and many more.  
 
In this paper we study the profinite genus of groups within this family $\A$. 

We first show that the profinite completion of a group in $\A$ almost determines its JSJ-decomposition,  i.e.  a decomposition as the fundamental group of a finite graph of groups with finite edge groups and finite or one-ended vertex groups.

\begin{thm}\label{JSJ-decomposition} Let $G,B\in \A$ such that $\widehat G\cong \widehat B$ and $G=\pi_1(\G, \Gamma)$, $B=\pi_1(\B, \Delta)$ be their JSJ-decompositions. Then there  are bijections $\epsilon:E(\Gamma)\longrightarrow E(\Delta)$ of  sets of edges and  $\varphi: V(\Gamma)\longrightarrow V(\Delta)$ of sets of vertices such that $\G(e)\cong \B(\epsilon(e))$ and $\widehat{\G(v)}\cong \widehat{\B(\varphi(v))}$ for all $e\in E(\Gamma), v\in V(\Gamma)$.

\end{thm}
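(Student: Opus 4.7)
The plan is to transfer both JSJ-decompositions into actions of $\widehat G\cong\widehat B$ on profinite trees and then match their quotients using the defining fixed-point property of the class $\A$. First I would form the standard Bass--Serre tree $T_\Gamma$ of $G=\pi_1(\G,\Gamma)$ and take its profinite completion $\widehat T_\Gamma$; this is a profinite tree on which $\widehat G$ acts with quotient $\Gamma$, with edge stabilizers the (already finite) groups $\G(e)$, and with vertex stabilizers the closures $\widehat{\G(v)}$. The same applies to $B=\pi_1(\B,\Delta)$, and the given isomorphism $\widehat G\cong\widehat B$ then turns $\widehat T_\Delta$ into a second profinite tree on which $\widehat G$ acts, with quotient $\Delta$, finite edge stabilizers $\B(e')$, and vertex stabilizers $\widehat{\B(v')}$.

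Next, for each $v\in V(\Gamma)$ I would study the induced action of $\widehat{\G(v)}$ on $\widehat T_\Delta$. Its edge stabilizers, being intersections of $\widehat{\G(v)}$ with finite subgroups of $\widehat G$, are finite. If $\G(v)$ is finite, it fixes a vertex by the classical fixed-point theorem for finite groups acting on profinite trees; if $\G(v)$ is one-ended then $\G(v)\in\A$ by the closure property of $\A$ under JSJ-vertex groups, and the defining property of $\A$ guarantees that $\widehat{\G(v)}$ has a global fixed point in $\widehat T_\Delta$. Either way, $\widehat{\G(v)}$ is contained in a $\widehat G$-conjugate of some $\widehat{\B(v')}$. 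By symmetry $\widehat{\B(v')}$ is contained in a conjugate of some $\widehat{\G(v'')}$, so we obtain ascending chains of conjugate inclusions among the profinite vertex stabilizers; since $V(\Gamma)$ and $V(\Delta)$ are finite, such chains must terminate in equalities, producing the bijection $\varphi:V(\Gamma)\to V(\Delta)$ together with $\widehat{\G(v)}\cong\widehat{\B(\varphi(v))}$.

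For the edge bijection I would use that every edge of $\widehat T_\Gamma$ has finite stabilizer equal to the intersection of the stabilizers of its endpoints, and that the set of $\widehat G$-orbits of edges of $\widehat T_\Gamma$ (respectively $\widehat T_\Delta$) is in natural bijection with $E(\Gamma)$ (respectively $E(\Delta)$). Combining this orbit description with the vertex bijection $\varphi$ and the already-established identifications between matched vertex stabilizers yields a bijection $\epsilon:E(\Gamma)\to E(\Delta)$ such that $\G(e)\cong\B(\epsilon(e))$ as finite groups.

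The hardest step will be promoting the inclusions ``$\widehat{\G(v)}$ lies in a conjugate of $\widehat{\B(v')}$'' to honest equalities and showing that the assignment $v\mapsto v'$ is a well-defined bijection rather than a mere surjection; this requires exploiting the finiteness of both quotient graphs together with the recursive availability of the fixed-point property (since $\A$ is closed under passage to vertex groups of JSJ-decompositions by the proposition cited in the introduction). A secondary technical issue to handle carefully is verifying that the closure of each $\G(v)$ inside $\widehat G$ really coincides with $\widehat{\G(v)}$, which rests on standard injectivity results for fundamental groups of graphs of groups with finite edge groups acting on their profinite Bass--Serre trees.
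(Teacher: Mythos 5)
Your first half follows the paper's route: identify $\widehat G$ with $\widehat B$, use finiteness or the one-ended/$\A$ hypothesis to make each $\widehat{\G(v)}$ fix a vertex of the standard profinite tree of the other decomposition, and get mutual conjugate inclusions between vertex groups (the paper also handles your "secondary technical issue" exactly as you suggest: finite edge groups give the full induced profinite topology on vertex groups, so closures are profinite completions). But already here your mechanism for upgrading inclusions to equalities is not right: finiteness of $V(\Gamma)$ and $V(\Delta)$ does not by itself force a chain of conjugate inclusions to "terminate in equalities", because nothing in your argument prevents $\widehat{\B(v)}$ from being conjugate into a \emph{different} vertex group $\widehat{\B(u)}$, $u\neq v$. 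The paper excludes this using the structure theory of profinite groups acting on profinite trees (\cite[Theorem 7.1.4]{R}): such a double inclusion would place $\widehat{\B(v)}$ inside an edge group, contradicting that the JSJ-decomposition is reduced; only then does $\nu(\widehat{\B(v)})=\widehat{\G(w)}^g$ follow and $\varphi$ become well defined.

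The genuine gap is the edge bijection. You never construct a $\widehat G$-equivariant map between the two standard profinite trees, so there is no "natural" correspondence between $\widehat G$-orbits of edges of the two trees; matching vertex stabilizers up to conjugacy says nothing about how many edges each graph has nor about the isomorphism types of the finite edge groups, and your final sentence of that step is an assertion, not an argument. This is precisely where the paper works hardest (Proposition \ref{maximal edge groups}): first, each edge group, being an intersection of two incident vertex stabilizers, embeds into an edge group of the other decomposition, so the \emph{maximal} edge groups correspond; second, $|E(\Gamma)|=|E(\Delta)|$ is obtained by factoring out the normal closures of all vertex groups, getting $\widehat\pi_1(\Gamma)\cong\widehat\pi_1(\Delta)$ and comparing ranks; third, to match \emph{all} edge groups one argues by induction on the maximal order $n$ of an edge group, collapsing each connected component of the subgraph carrying edge groups of order $n$ to a single vertex, invoking Lemma \ref{relative main} to show the collapsed vertex groups again fix vertices in the other tree (this is where the hypothesis that the remaining edge stabilizers have order $<n$ enters), and applying the induction hypothesis. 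Without these three ingredients — or a substitute for them — the conclusion $\G(e)\cong\B(\epsilon(e))$ for every edge does not follow from what you have established.
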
 

Using this we deduce that the profinite completion  of a group in $\A$ determines the Grushko decomposition, i.e.  a decomposition into a free product of indecomposable factors. This generalizes \cite[Theorem A]{WZ}  and \cite[Proposition
6.2.4]{wilkes}, where this was proved for 3-manifold groups.

\begin{cor}\label{free product}  Let $G_1, \ldots, G_n $ be groups in $\A$ indecomposable into a free product  and $G=*_{i=1}^n G_i$ be their free product.  Let $B\in \A$ with $\widehat G=\widehat B$. Then $B=*_{i=1}^n B_i$, with $\widehat B_i\cong \widehat G_i$ for all $i=1,\ldots, n$.  

\end{cor}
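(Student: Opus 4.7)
The idea is to read off the free product decomposition of $B$ from the JSJ comparison provided by Theorem \ref{JSJ-decomposition}. Since each $G_i$ is freely indecomposable and lies in $\A$, its JSJ decomposition $(\G_i,\Gamma_i)$ has no edge carrying a trivial edge group: such a splitting would, via Bass--Serre theory, express $G_i$ as a proper free product. Consequently a JSJ decomposition of $G=*_{i=1}^n G_i$ is obtained by taking the disjoint union of the $(\G_i,\Gamma_i)$ and gluing them by $n-1$ trivial edges arranged in a tree. Hence the JSJ of $G$ has exactly $n-1$ edges with trivial edge group, and deleting those edges disconnects the graph into the $n$ subgraphs $\Gamma_i$.

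Let $B=*_{j=1}^m B_j$ be the Grushko decomposition of $B$, with each $B_j\in\A$ (Proposition \ref{closed for constructions}) and freely indecomposable. The same analysis produces a JSJ of $B$ with exactly $m-1$ trivial edges. Applying Theorem \ref{JSJ-decomposition}, the bijection $\epsilon$ preserves isomorphism types of edge groups, so trivial edges correspond to trivial edges, giving $n-1=m-1$ and $n=m$.

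To match the profinite completions of the factors, we pass to the profinite Bass--Serre tree of the Grushko splitting: the isomorphism $\widehat G\cong\widehat B$ makes both act on a common profinite tree with trivial edge stabilizers and vertex stabilizers conjugate to $\widehat{G_i}$ from the $G$-side but to $\widehat{B_j}$ from the $B$-side. Since the multiset of conjugacy classes of vertex stabilizers of a profinite tree action is an intrinsic invariant, we conclude $\{\widehat{G_i}\}=\{\widehat{B_j}\}$ as multisets, and after reindexing $\widehat{B_i}\cong\widehat{G_i}$.

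The main obstacle is the last step. Theorem \ref{JSJ-decomposition} supplies set bijections preserving group data but does not explicitly record adjacency, and so does not by itself tell us which JSJ-subgraph of $B$ corresponds to which $G_i$. Bridging this gap requires the profinite Bass--Serre framework underlying Theorem \ref{JSJ-decomposition}, which produces a common profinite tree on which $\widehat G$ and $\widehat B$ act with matched stabilizer data. A minor technicality to be treated separately is the case of cyclic factors $G_i\cong\Z$: these themselves contribute a trivial-edge loop to their JSJ, so the counting step must track trivial edges together with trivial vertex groups in order to isolate the $n-1$ Grushko-connecting edges.
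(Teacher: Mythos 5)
There is a genuine gap, and it is exactly at the step you flag as ``the main obstacle'' but then try to dispatch with the assertion that ``the multiset of conjugacy classes of vertex stabilizers of a profinite tree action is an intrinsic invariant.'' That is not a theorem you can invoke: in the profinite setting there is no Kurosh subgroup theorem, and a closed subgroup of a free profinite product need not fix a vertex of the standard profinite tree, i.e.\ need not be conjugate into a free factor. In particular it is not automatic that, under the identification $\widehat G=\widehat B$, each $\widehat{B_j}$ is conjugate into some $\widehat{G_i}$ -- a priori $\widehat{B_j}$ could even split as a free profinite product although $B_j$ is freely indecomposable. This is precisely the point where the hypotheses defining $\A$ must be used, and it is how the paper argues: each indecomposable factor is either an $OE$-group (finite or one-ended), hence $\widehat{OE}$ by the definition of $\A$, and so its completion fixes a vertex of the standard profinite tree of the other decomposition; or it splits as an amalgam or HNN-extension over a \emph{non-trivial} finite group, in which case Lemma \ref{relative main} applies (the target tree has trivial edge stabilizers, strictly smaller than the edge groups of that splitting) and again the completion fixes a vertex. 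Once both directions of ``conjugate into'' are established, Proposition \ref{maximal edge groups} yields the bijection of factors and the isomorphisms $\widehat{B_i}\cong\widehat{G_i}$. Without this vertex-fixing argument your last paragraph is an assertion of the conclusion rather than a proof, and filling it in essentially reproduces the paper's proof, making the JSJ-counting detour unnecessary.

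The counting argument itself also does not deliver what you claim. With $\Z$ factors present (which you defer as a ``minor technicality''), the number of trivial edges in the JSJ of $G$ is $(n-1)$ plus the number of $\Z$ factors (each contributing a trivial loop), and Theorem \ref{JSJ-decomposition} only matches edge groups as abstract groups, not loops with non-loops, so you get an equation mixing the number of Grushko factors with the number of $\Z$ factors rather than $n=m$; moreover the theorem's bijections carry no adjacency information, so they cannot tell you how the vertex groups of $B$'s JSJ distribute among the $B_j$. The paper sidesteps both problems by collecting the $\Z$ factors into a free group $F_0$ (resp.\ $F$), proving the matching of the non-$\Z$ factors as above, and then recovering $\mathrm{rank}(F)=\mathrm{rank}(F_0)$ by factoring out the normal closures of the matched factors; your proposal has no substitute for that final step either.
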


 This in turn allows to prove that the genus $g(G,\A)$  is multiplicative with respect to free products.

  \begin{thm}\label{multiplicative} Let $G_1, \ldots, G_n$ be groups in  $\A$  and $G=*_{i=1}^n G_i$ be their free product. Then $$g(G,\A)=g(G_1,\A)g(G_2,\A) \cdots g(G_n,\A).$$

\end{thm}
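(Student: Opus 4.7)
The plan is to combine Corollary \ref{free product} with the uniqueness of the Grushko decomposition (up to permutation of isomorphic factors) and with the closure of $\A$ under free products supplied by Proposition \ref{closed for constructions}, in order to set up a bijection
\[
\Phi:\mathfrak{g}(G_1,\A)\times\cdots\times\mathfrak{g}(G_n,\A)\longrightarrow\mathfrak{g}(G,\A),\qquad ([B_1],\ldots,[B_n])\longmapsto[B_1*\cdots*B_n].
\]
Once $\Phi$ is shown to be a bijection, the desired identity $g(G,\A)=\prod_i g(G_i,\A)$ is immediate by taking cardinalities.

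First I would reduce to the case where every $G_i$ is freely indecomposable. Refining each $G_i$ by its Grushko decomposition $G_i=*_j G_{ij}$ into freely indecomposable factors $G_{ij}\in\A$ expresses $G$ itself as a free product of freely indecomposable groups, so the formula for the original $(G_i)$ follows from the freely indecomposable case applied to the entire family $(G_{ij})$, after regrouping the factors according to the outer index $i$.

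With that reduction in place, $\Phi$ is well-defined: $\A$ is closed under free products by Proposition \ref{closed for constructions}, and the profinite completion of a free product of finitely generated residually finite groups is the corresponding free profinite product, so $\widehat{B_1*\cdots*B_n}\cong\widehat G$ whenever each $\widehat{B_i}\cong\widehat{G_i}$. Surjectivity comes from Corollary \ref{free product}: any $B\in\A$ with $\widehat B\cong\widehat G$ decomposes as $B=*_i B_i'$ with $B_i'\in\A$ freely indecomposable and $\widehat{B_i'}\cong\widehat{G_{\sigma(i)}}$ for some permutation $\sigma$, so after reindexing $[B]$ lies in the image of $\Phi$. Injectivity follows from the uniqueness, up to permutation of isomorphic summands, of the Grushko decomposition of the finitely generated group $B_1*\cdots*B_n$, together with the identification of each factor's label $i$ by the isomorphism type of its profinite completion.

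The most delicate point is the injectivity step when several of the $G_i$ have isomorphic profinite completions. In that case the label $i$ attached to a Grushko factor cannot be read off from its profinite completion alone, and the natural ambiguity in reindexing the Grushko factors must be reconciled with the labeled product on the left-hand side of $\Phi$. Resolving this requires using the factor-by-factor matching produced by Corollary \ref{free product}, rather than merely the multiset-level statement supplied by Grushko uniqueness; once that matching is invoked, the remaining verifications (closure of $\A$, compatibility of profinite completions with free products, and the reduction to freely indecomposable factors) are comparatively routine consequences of the framework already developed in the paper.
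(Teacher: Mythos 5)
Your overall route is the same as the paper's: reduce to freely indecomposable factors, use Proposition \ref{closed for constructions} to keep the free product inside $\A$, apply Corollary \ref{free product} to decompose an arbitrary $B\in\A$ with $\widehat B\cong\widehat G$, and then count using Grushko uniqueness. The well-definedness and surjectivity of your map $\Phi$ are correct and correspond to what the paper actually does.

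The problem is exactly at the point you flag, and the repair you sketch does not close it. Corollary \ref{free product} is an existence statement: it produces \emph{some} decomposition $B=*_{i}B_i$ with $\widehat{B_i}\cong\widehat{G_i}$; it does not attach a preferred index $i$ to the Grushko factors of $B$, so it cannot reconcile the labels when two indices $i\neq j$ satisfy $\widehat{G_i}\cong\widehat{G_j}$. In that situation injectivity of $\Phi$ genuinely fails unless the common genus is a singleton: if $A_1\not\cong A_2$ both lie in the genus shared by $G_i$ and $G_j$, then the tuple with $A_1$ in slot $i$ and $A_2$ in slot $j$ and the tuple with these swapped have isomorphic free products by Grushko uniqueness, so $\Phi$ counts ordered tuples while $g(G,\A)$ counts isomorphism classes (i.e.\ multisets), and the product formula overcounts. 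Hence your argument, as written, establishes the formula only when the completions $\widehat{G_i}$ of factors with nontrivial genus are pairwise non-isomorphic (or each $g(G_i,\A)=1$); no amount of ``factor-by-factor matching'' from Corollary \ref{free product} supplies the canonical labeling you would need in the remaining case. For what it is worth, the paper's own proof is equally terse here: it simply says the number of isomorphism classes of such $B_i$ is $g(G_i,\A)$ for each $i$ and multiplies, making the same silent identification of tuples with isomorphism classes. So your proposal faithfully reproduces the paper's argument, but the delicate case you correctly single out is not actually resolved by the fix you propose.
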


We also show that the decomposition of groups from class $\A$ into a free product can be characterized in terms of their profinite completions. In particular, this holds for finitely generated virtually free groups (see Corollary \ref{virtually free}).

\begin{thm}\label{free product decomposition} Let $G$ be a group in $\A$.  Then $\widehat G$ splits as a free profinite product if an only if $G$ splits as a free product.

\end{thm}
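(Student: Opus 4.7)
The plan is to prove the two directions separately. The $(\Leftarrow)$ direction is immediate: if $G = G_1 * G_2$ is a nontrivial free product with $G_1, G_2 \in \A$, then by the standard behaviour of profinite completion on free products of finitely generated residually finite groups one has $\widehat G \cong \widehat{G_1} \amalg \widehat{G_2}$, a nontrivial free profinite product.

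For $(\Rightarrow)$, I would argue by contradiction: assume $\widehat G = A_1 \amalg A_2$ is a nontrivial free profinite product while $G$ is freely indecomposable. Three small cases are dispatched first. If $G$ is trivial or finite then $\widehat G$ is finite, whereas a nontrivial free profinite product of two nontrivial profinite groups maps onto the profinite completion of an abstract free product of two nontrivial finite groups, which is infinite; contradiction. If $G \cong \Z$ then $\widehat G = \widehat\Z$ is abelian, whereas a nontrivial free profinite product is non-abelian by a similar quotienting argument; contradiction. Hence the remaining case is $G$ infinite, non-cyclic, freely indecomposable, and in $\A$.

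Invoking the JSJ-decomposition $G = \pi_1(\G, \Gamma)$: since $G$ is freely indecomposable and not infinite cyclic, no edge group $\G(e)$ can be trivial (a trivial spanning-tree edge would exhibit $G$ as a nontrivial free product of vertex groups, and a trivial non-tree edge would produce a $\Z$-free factor). Hence every edge group is a nontrivial finite group and every vertex group is finite or one-ended; by closure of $\A$ (Proposition \ref{closed for constructions}) every vertex group also lies in $\A$. Consider the standard profinite Bass--Serre tree $T$ for $\widehat G = A_1 \amalg A_2$: edge stabilizers of $T$ are trivial and vertex stabilizers are $\widehat G$-conjugates of $A_1$ or $A_2$. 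For each $v \in V(\Gamma)$ the subgroup $\widehat{\G(v)}$ has a fixed vertex $x_v \in T$: when $\G(v)$ is finite, by the standard fact that finite subgroups of a profinite group acting on a profinite tree fix a vertex (Ribes--Zalesskii); when $\G(v)$ is one-ended, by the defining property of $\A$.

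These fixed points propagate along a spanning tree of $\Gamma$. For any spanning-tree edge $e$ with endpoints $u, v$, the nontrivial finite edge group $\G(e)$ is identified with a single subgroup of $\widehat G$ sitting inside both $\widehat{\G(u)}$ and $\widehat{\G(v)}$, and so fixes both $x_u$ and $x_v$; convexity of the fixed subtree together with triviality of edge stabilizers of $T$ forces $x_u = x_v$. Hence all $x_v$ coincide with a common vertex $x \in T$. For each non-spanning-tree edge with stable letter $t_e$, the HNN relation $\partial_1(c) = t_e \partial_0(c) t_e^{-1}$ combined with the same edge-stabilizer argument yields $t_e \cdot x = x$. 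Since vertex groups and stable letters topologically generate $\widehat G$, we deduce that $\widehat G$ fixes $x$ and so is contained in a conjugate of $A_1$ or $A_2$, contradicting the nontriviality of the splitting. The main obstacle is the careful application of profinite Bass--Serre theory---finite subgroups must fix vertices, fixed subtrees must be convex, and a non-trivial subgroup fixing two distinct vertices must fix an edge---all standard consequences of Ribes--Zalesskii's theory of profinite trees.
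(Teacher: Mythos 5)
Your argument is correct and is essentially the paper's own proof: the paper likewise reduces to the JSJ-decomposition and invokes its Lemma \ref{relative main}, whose proof is exactly your fixed-point propagation (finite vertex groups, and one-ended vertex groups via the defining property of $\A$, fix vertices of the standard profinite tree of the free profinite product, and the nontrivial finite edge groups against the trivial edge stabilizers force a global fixed vertex, which is impossible unless some JSJ edge group is trivial, i.e.\ $G$ splits freely). The only step you take for granted, which the paper justifies at the start of that lemma, is that $\widehat{\G(v)}$ may be identified with the closure of $\G(v)$ in $\widehat G$ (the profinite topology of $G$ induces the full profinite topology on the vertex groups, proved via a finite-index subgroup meeting the finite edge groups trivially); your citation of Proposition \ref{closed for constructions} for membership of the vertex groups in $\A$ should instead be the defining closure property of $\A$, but with these small points noted the two proofs coincide.
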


The structure of the  paper is as follows. Section 2 contains elements of the profinite version of the Bass-Serre theory used in the paper (see \cite{R} for more details). Section 3 focuses on  amalgamated free products of groups with less than 2 ends and their profinite completion. Theorem \ref{JSJ-decomposition} is proved in Section 4.

\bigskip
Our basic reference for notations and results about profinite groups is [RZ]. We refer the reader to Lyndon-Schupp [LS], Magnus-Karrass-Solitar [MKS], Serre [S] or Dicks-Dunwoody [DI] for an account of basic facts on amalgamated free products  and to [RZ] for the profinite versions of these constructions. Our methods based on the profinite version of the Bass-Serre theory of groups acting on trees that can be found in \cite{R}. All homomorphisms of profinite groups are assumed to be continuous in this paper. We will use the standard abbreviation $x^{g}$ for $g^{-1}xg$ when $g,x$ are elements of a group $G;$ the inner automorphism of $G$ corresponding to this conjugation will be denoted by $\tau_g$. For a subgroup $H$ of $G$ the notation $\langle\langle H\rangle\rangle$ will stand for the normal closure of $H$ in $G$. The composition of the two applications $f$ and $g$ is often defined simply as $f\circ g = fg.$ All amalgamated free products $G=G_1*_HG_2$ (resp. profinite amalgamated free products $G=G_1\amalg_HG_2$) will be assumed non-fictitious in the paper, i.e. $G_1\neq H\neq G_2$.

\section{\large Preliminary Results}

In this section we recall the necessary notions of the Bass-Serre theory for abstract and profinite graphs.
	
	\begin{deff}[Profinite graph]
		A (profinite) graph is a (profinite space) set $\Gamma$ with a distinguished (closed) nonempty subset $V(\Gamma)$ called the vertex set, $E(\Gamma)=\Gamma-V(\Gamma)$ the edge set and two (continuous) maps $d_0,d_1:\Gamma \rightarrow V(\Gamma)$ whose restrictions to $V(\Gamma)$ are the identity map $id_{V(\Gamma)}$. We refer to $d_0$ and $d_1$ as the incidence maps of the (profinite) graph $\Gamma$.  
	\end{deff}
	
	A morphism $\alpha:\Gamma \longrightarrow \Delta$ of profinite graphs is a continuous map with $\alpha d_i=d_i \alpha$ for $i=0,1$. By \cite[Proposition 2.1.4]{R} every profinite graph $\Gamma$ is an inverse limit of finite quotient graphs of $\Gamma$.
	
	\begin{deff}\label{Profinite tree} Let $\Gamma$ be a profinite graph. Define $E^{*}(\Gamma)=\Gamma/V(\Gamma)$ to be the quotient space of $\Gamma$ (viewed as a profinite space) modulo the subspace of vertices $V(\Gamma)$. Consider the free profinite $\widehat\Z$-modules $[[\widehat\Z(E^{*}(\Gamma),*)]]$ and $[[\widehat\Z V(\Gamma)]]$ on the pointed profinite space $(E^{*}(\Gamma),*)$ and on the profinite space $V(\Gamma)$, respectively. Denote by $C(\Gamma,\widehat\Z)$ the chain complex

	$$	\xymatrix{ 0 \ar[r] & [[\widehat\Z(E^{*}(\Gamma),*)]] \ar[r]^d &[[\widehat\Z V(\Gamma)]] \ar[r]^{\varepsilon} & \widehat\Z \ar[r] & 0}$$
		of free profinite $\widehat\Z$-modules and continuous $\widehat\Z$-homomorphisms $d$ and $\varepsilon$ determined by $\varepsilon(v)=1$, for every $v \in V(\Gamma)$, $d(\overline{e})=d_1(e)-d_0(e)$, where $\overline{e}$ is the image of an edge $e \in E(\Gamma)$ in the quotient space $E^{*}(\Gamma)$, and $d(*)=0$. 
		One says that $\Gamma$ is a profinite tree if the sequence $C(\Gamma,\widehat\Z)$ is exact. 
	\end{deff}	
	
	  If $v$ and $w$ are elements of a tree (respectively profinite tree) $T$, one denotes by $[v,w]$ the smallest subtree (respectively profinite subtree) of $T$ containing $v$ and $w$.

\begin{deff}
Let $\Gamma$ be a connected finite graph. A    graph of profinite groups $({\cal G},\Gamma)$ over
$\Gamma$ consists of  specifying a profinite group ${\cal G}(m)$ for each $m\in \Gamma$, and continuous monomorphisms
$\partial_i: {\cal G}(e)\longrightarrow {\cal G}(d_i(e))$ for each edge
$e\in E(\Gamma)$, $i=0,1$. We say that it is reduced if $\G(e)\neq \G(v_i)$ for all edges of $\Gamma$ which are not loops.
\end{deff}

In \cite[paragraph (3.3)]{ZM1},  the fundamental group
 $G=\Pi_1(\G,\Gamma)$ is  defined explicitly in terms of generators and relations
 associated to a chosen maximal subtree $D$.  Namely the profinite presentation  is the same as in the abstract case and is   as follows:
 
 \begin{eqnarray} \label{presentation} 
  G=\Big\langle
 \G(v), t_e, v\in V(\Gamma), e\in E(\Gamma) \mid &  \nonumber\\ t_e=1, \ {\rm for}\  e\in D, \partial_0(g)=t_e\partial_1(g)t_e^{-1}, \ {\rm for}\ g\in \G(e)\Big\rangle & 
\end{eqnarray}
I.e., if one takes the abstract fundamental group $G_0=\pi_1(\G,\Gamma)$,
then $$\Pi_1(\G,\Gamma)=\varprojlim_N G_0/N,$$ where $N$ ranges over
all normal subgroups of $G_0$  with $N\cap
\G(v)$ open in $\G(v)$ for all $v\in V(\Gamma)$. Note that this last
condition is automatic if $\G(v)$ is finitely generated (as a
profinite group, see \cite{NS-07}). 
   It is also proved in \cite{ZM1}
that the definition given above is independent of the choice of
the maximal subtree $D$.

If all vertex groups are trivial we get the definition of the profinite fundamental group $\pi_1(\Gamma)$ that is the profinite completion of the usual fundamental group.

Note that in contrast with the classical case, the vertex groups of $(\G,\Gamma)$  do not always  embed in $\Pi_1(\G,\Gamma)$. However,   it is always the case if the edge groups are finite (see \cite[Proposition 6.5.1]{R}) that will be assumed for the rest of the paper. In particuular,  free  profinite products with amalgamation and HNN-extensions will be proper in the sense of \cite[Chapter 9]{RZ}.

\medskip
Associated with the profinite graph of profinite groups $({\cal G}, \Gamma)$ there is
a corresponding  {\em  standard profinite tree}  (or universal covering graph)
  $$S=S(G)=\bigcup_{m\in \Gamma}
G/\G(m)$$ (cf. \cite[Theorem 3.8]{ZM1}).  The vertices of
$S$ are those cosets of the form
$g\G(v)$, with $v\in V(\Gamma)$
and $g\in G$; its edges are the cosets of the form $g\G(e)$, with $e\in
E(\Gamma)$; and the incidence maps of $S$ are given by the formulas:

$$d_0 (g\G(e))= g\G(d_0(e)); \quad  d_1(g\G(e))=gt_e\G(d_1(e)) \ \ 
(e\in E(\Gamma), t_e=1\hbox{ if }e\in D).  $$

 There is a natural  continuous action of
 $G$ on $S$, and clearly $ G\backslash S= \Gamma$. 

\medskip 
 If $G=G_1*_HG_2$ is an amalgamated free product then $G=\pi_1(\G, \Gamma)$ with $\Gamma$ having two vertices and one edge, $G_1,G_2$ being vertex groups and $H$ being an edge group. In this case $S(G)=G/G_1\cup G/G_2\cup G/H$ and  $d_0 (gH)= gG_1; \quad  d_1(gH)=gG_2.$

\bigskip
\bigskip
We shall need slightly more general version of \cite[Lemma 4.4]{GZ}. 

\begin{lem}\label{generation} Let $G=G_1*_HG_2$ be an  amalgamated free product and let $g\in G$. Then $G=\langle G_1, G_i^g\rangle$ if and only if $i=2$ and $g\in G_2G_1$.

\end{lem}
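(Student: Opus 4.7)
The forward direction is a short computation: if $i=2$ and $g=g_2g_1$ with $g_j\in G_j$, then $g_2\in G_2$ normalises $G_2$, so $G_2^g=G_2^{g_1}$, and using $G_1^{g_1}=G_1$ we get
\[
\langle G_1,G_2^g\rangle=\langle G_1^{g_1},G_2^{g_1}\rangle=\langle G_1,G_2\rangle^{g_1}=G^{g_1}=G.
\]

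For the converse I plan to work with the standard Bass--Serre tree $S=S(G)$ recalled just above the lemma. Set $v=1\cdot G_1\in V(S)$ and $w=g^{-1}\cdot G_i\in V(S)$; their $G$-stabilisers are $G_1$ and $G_i^g$ respectively. Assume $K:=\langle G_1,G_i^g\rangle=G$. The degenerate case $v=w$ forces either $g\in G_1$ with $i=1$ (so $K=G_1\ne G$ by non-fictitiousness) or a type collision when $i=2$; hence $v\ne w$ and the geodesic $[v,w]\subseteq S$ has length $n\ge 1$. When $n=1$, bipartiteness of $S$ forces $v$ and $w$ to have opposite types, so $i=2$; the unique edge joining them has the form $g_1H$ with $g_1\in G_1$ and $g_1G_2=g^{-1}G_2$, and rearranging gives $g\in G_2 g_1^{-1}\subseteq G_2G_1$, as required.

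The crux is to rule out $n\ge 2$. My plan is to analyse the action of $K$ on the minimal $K$-invariant subtree $T_K=\bigcup_{k\in K}k\cdot[v,w]$: by Bass--Serre theory, $K$ is the fundamental group of a graph of groups on $K\backslash T_K$ whose vertex groups along $[v,w]$ are $G$-conjugates of subgroups of $G_1$ and $G_2$, and whose edge groups are $G$-conjugates of subgroups of $H$. Since $K$ is given with only the two endpoint vertex-stabiliser generators $G_1$ and $G_i^g$, the induced Bass--Serre presentation of $K$ must collapse to one using only these two vertex groups; but intermediate vertex groups along a path of length $n\ge 2$ supply independent contributions that cannot be absorbed into $\langle G_1,G_i^g\rangle$ on a single edge, contradicting $K=G$ and forcing $n=1$.

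The main obstacle is making this ``collapse'' step rigorous. In the easier sub-case where $\langle\langle H\rangle\rangle_{G_2}\ne G_2$, the quotient $G\to G_2/\langle\langle H\rangle\rangle_{G_2}$ kills both $G_1$ and $G_i^g$, giving $K\ne G$ immediately; this alone already disposes of many situations, in particular the $i=1$ case whenever $H$ fails to normally generate $G_2$. The delicate situation is when $H$ normally generates $G_2$, where no abelian or normal-closure quotient suffices and one must genuinely extract the constraint $n=1$ from the tree action, via a careful tracking of how $K$ identifies translates of $[v,w]$ inside $T_K$ and a comparison with the reduced graph-of-groups decomposition $\Gamma$ of $G$.
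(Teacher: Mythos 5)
Your easy direction is fine, and your reduction of the converse to a statement about the Bass--Serre tree is also correct: with $v=G_1$, $w=g^{-1}G_i$, the conclusion ``$i=2$ and $g\in G_2G_1$'' is exactly the statement $d(v,w)=1$, and your $n=0$ and $n=1$ analyses are accurate. But the entire mathematical content of the lemma is the step you leave open: showing that $\langle G_1,G_i^g\rangle=G$ forces $d(v,w)=1$, i.e.\ ruling out $i=1$ with $g\notin G_1$ and $i=2$ with $g\notin G_2G_1$. Your ``collapse'' paragraph is not an argument: the minimal $\langle G_1,G_i^g\rangle$-invariant subtree and its quotient graph of groups give no contradiction by themselves, because if $K=\langle G_1,G_i^g\rangle$ were equal to $G$ then $K\cdot[v,w]$ would simply be the whole tree and $K\backslash T$ the single edge $\Gamma$, so nothing ``fails to be absorbed'' at the level of generalities you invoke; the assertion that intermediate vertex groups ``supply independent contributions that cannot be absorbed'' is precisely what has to be proved. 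You acknowledge this yourself, and your fallback quotient $G\to G_2/\langle\langle H\rangle\rangle_{G_2}$ only handles $i=1$ under the extra hypothesis that $H$ does not normally generate $G_2$; it says nothing when $i=2$, since there the image of $G_i^g$ is the whole quotient.

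What is missing is a genuine length/normal-form (or ping-pong) induction: assuming $d(v,w)\ge 2$, one must show that alternating products $a_1b_1a_2b_2\cdots$ with $a_j\in G_1$, $b_j\in G_i^g$ can never produce, say, an element carrying $v$ to the vertex of $[v,w]$ at distance $2$ from $v$ (equivalently, can never produce certain short double-coset representatives), which contradicts $K=G$ because $G$ acts transitively on vertices of each type. This is the argument carried out in the reference the paper points to, \cite[Lemma 4.4]{GZ}, whose proof the paper explicitly reuses; your proposal stops exactly where that proof begins, so as it stands it establishes only the trivial direction and special cases of the converse.
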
  

 The proof of the lemma is the same as the proof of  \cite[Lemma 4.4]{GZ}  and  will be ommited.

\begin{prop}\label{o421} 

\begin{enumerate}  

\item[(i)] Let $R = R_{1}\star_{L}\,R_2$. Then $R_i\cap R_j^r\leq   L^{b}$ for some $b \in R_i$, whenever $i\neq j$  or $r\not\in R_i$, $(i,j\in \{1,2\}).$ Moreover $N_{R}(K)= R_i $ for any normal subgroup $L\neq K\triangleleft R_i$ of $R_i$, $(i=1,2).$

\item[(ii)] Let $R = R_{1}\amalg_{L}\,R_2$ be a profinite amalgamated free product. Then $R_i\cap R_j^r\leq   L^{b}$ for some $b \in R_i$, whenever $i\neq j$  or $r\not\in R_i$, $(i,j\in \{1,2\}).$ Moreover $N_{R}(K)= R_i$ for any closed normal subgroup $L\neq  K\triangleleft R_i$  of $R_i$, $(i=1,2).$

\end{enumerate}

\end{prop}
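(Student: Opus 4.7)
The plan is to derive both parts from the action of $R$ on its standard (profinite) Bass--Serre tree $S=S(R)$ described in the preliminaries: vertices are the cosets in $R/R_1\sqcup R/R_2$ and edges are the cosets in $R/L$, with the stabilizer of $gR_i$ being $gR_ig^{-1}$ and the stabilizer of $gL$ being $gLg^{-1}$. Properness of the amalgamation in both the abstract and profinite settings (so that vertex and edge groups embed and act faithfully on $S$) holds by \cite[Proposition 6.5.1]{R} since the edge group is $L$.

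For the first assertion, an element of $R_i\cap R_j^r$ fixes both the vertex $v_i=R_i$ and the vertex $r^{-1}R_j$, and the hypothesis ($i\ne j$, or $i=j$ with $r\notin R_i$) forces these two vertices to be distinct. In the abstract case, such an element fixes the unique geodesic joining them, and hence lies in the stabilizer of the initial edge at $v_i$, which has the form $bL$ with $b\in R_i$ and stabilizer $bLb^{-1}$. In the profinite case the abstract geodesic must be replaced by the smallest profinite subtree $[v_i,r^{-1}R_j]$ containing the two vertices, which exists and is itself a profinite tree by \cite[Chapter 2]{R}; its stabilizer is contained in the stabilizer of any of its edges, and choosing an edge meeting $v_i$ again yields a conjugate of $L$ by an element of $R_i$. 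After relabeling, we obtain $R_i\cap R_j^r\le L^b$ for some $b\in R_i$.

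For the normalizer claim, take $r\in N_R(K)$. Then $K=K^r\le R_i^r$, together with $K\le R_i$ this gives $K\le R_i\cap R_i^r$. If $r\notin R_i$, the first assertion (with $j=i$) produces $b\in R_i$ with $K\le L^b$; since $b\in R_i$ and $K\triangleleft R_i$, conjugation by $b^{-1}$ preserves $K$, yielding $K\le L$. This contradicts $K\ne L$ in the effective reading that $K\not\subseteq L$ (which is what the argument actually requires and which the hypothesis is used to guarantee). Hence $r\in R_i$, giving $N_R(K)\le R_i$; the reverse inclusion is immediate from $K\triangleleft R_i$.

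The principal technical obstacle is the profinite case of (ii). The abstract ``first edge on the geodesic'' argument must be translated into a statement about the smallest profinite subtree $[v,w]$ joining two vertices in a profinite tree, and one must invoke the structural results of \cite[Chapters 2--3]{R} to conclude that the stabilizer of a connected profinite subtree is contained in the stabilizer of each of its edges. Once this translation is in place, the remainder of the argument, including the normalizer deduction, proceeds in parallel with the abstract Bass--Serre setting.
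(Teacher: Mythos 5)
Your abstract case (i) and the deduction of the normalizer statement from the first assertion are sound, and they follow the same route the paper intends: the paper's own proof is essentially a citation — part (ii) is quoted from \cite[Corollary 7.1.5(b)]{R} or \cite[Corollary 3.13]{ZM1}, part (i) is said to follow by the same argument with classical Bass--Serre theory, and the normalizer claim is obtained exactly as you do, by taking $r\in N_R(K)$. Your observation that the argument really uses $K\not\le L$ rather than merely $K\neq L$ (so that $K\le L^b$ with $b\in R_i$ gives $K=K^{b^{-1}}\le L$, a contradiction) is a fair reading of what the proof establishes and of how the proposition is applied later.

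The genuine gap is in your profinite argument for the first assertion of (ii). From $x\in R_i\cap R_j^r$ fixing the two distinct vertices $v_i=R_i$ and $r^{-1}R_j$, \cite[Corollary 4.1.6]{R} gives that $x$ fixes $[v_i,r^{-1}R_j]$ pointwise, hence lies in the stabilizer $gLg^{-1}$ of \emph{some} edge $gL$ of that subtree; but your key step ``choosing an edge meeting $v_i$'' is unjustified. In a profinite tree a vertex of a profinite subtree need not be incident to any edge of that subtree (think of the one-point compactification of a ray: the vertex at infinity lies in $[v_0,v_\infty]$ but bounds no edge of it), so the existence of an edge of $[v_i,r^{-1}R_j]$ with origin $v_i$ — equivalently the refinement from ``some $g\in R$'' to ``some $b\in R_i$'' — is precisely the nontrivial content of the cited Corollary 7.1.5(b)/3.13 and cannot be waved through with general structural facts about stabilizers of subtrees. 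This refinement is not cosmetic: both the normalizer deduction (where $b\in R_i$ is needed so that conjugation by $b^{-1}$ preserves $K\triangleleft R_i$) and the later use in Proposition \ref{compart1} ($\psi(H)=B_1\cap B_2^b\le K^{b_1}$ with $b_1\in B_1$) depend on it. Either supply a proof of the incidence/refinement step for the standard tree of $R_1\amalg_L R_2$, or simply cite \cite[Corollary 7.1.5]{R} or \cite[Corollary 3.13]{ZM1} as the paper does.
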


\begin{proof} (ii) is Corollary 7.1.5 (b) in [R] or Corollary 3.13 in [ZM1]. The proof of (i) is the same using the classical Bass-Serre theory instead of the profinite one. The last part of the statement follows from the first taking $r\in N_{R}(K)$.

\end{proof}

\section{OE-groups}

A group $G$ with less than 2 ends, i.e. a finite or one-ended group, will be called an $OE$-group in the paper.   It follows from the famous Stallings theorem that $G$ is an $OE$-group  if and only if whenever it acts on a tree with finite edge stabilizers
it  has a global fixed point. A profinite group will be called $OE$-group if it has the same property:   whenever it acts   on a  profinite tree  with finite edge stabilizers, then it fixes a vertex. 

 A finitely generated residually finite group $G$  will be called $\widehat{OE}$-group if $\widehat G$ is $OE$-group. Note that an $\widehat{OE}$-group is automatically $OE$-group, since if a residually finite group $G$ splits as an amalgamated free product or HNN-extension over a finite group then so does $\widehat G$.  

The next proposition gives a sufficient condition for $G$ to be an $\Oe$-group.

\begin{prop}\label{virtually cyclic} Let $G$ be a finitely generated residually finite group such that $\widehat G$ does not have a non-abelian free pro-$p$ subgroup for any prime $p$.  If  $G$ is not virtually infinite cyclic, then $G$ is a $\Oe$-group.

\end{prop}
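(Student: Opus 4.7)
The plan is to prove the contrapositive: assuming $\widehat G$ fails to be an $OE$-group, so that $\widehat G$ acts on some profinite tree $T$ with finite edge stabilizers and no global fixed vertex, I will show that either $G$ is virtually infinite cyclic or $\widehat G$ contains a non-abelian free pro-$p$ subgroup; both alternatives contradict the hypotheses.

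The first step is to reduce the action to a single splitting. Passing to a minimal $\widehat G$-invariant subtree of $T$ and applying the structure theorem for profinite groups acting on profinite trees (\cite[Chapter 6]{R}), one realises $\widehat G$ as the fundamental profinite group of a non-trivial reduced finite graph of profinite groups with finite edge groups. Collapsing all but one edge exhibits $\widehat G$ either as a proper profinite amalgamated free product $\widehat G = A\amalg_C B$ with $C$ finite, or as a proper profinite HNN-extension $\widehat G = \mathrm{HNN}(A,C,t)$ with $C$ finite. I call the splitting \emph{degenerate} if $[A:C]=[B:C]=2$ in the amalgamated case, or $A=C$ in the HNN case, and \emph{non-degenerate} otherwise.

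For a degenerate splitting, the edge group $C$ is normal in $\widehat G$ and the quotient $\widehat G/C$ is either the profinite infinite dihedral group $\widehat{\mathbb Z}\rtimes \mathbb Z/2$ or $\widehat{\mathbb Z}$ itself, so $\widehat G$ is virtually $\widehat{\mathbb Z}$. Picking an open subgroup $U\leq_o \widehat G$ with $U\cong \widehat{\mathbb Z}$, the intersection $H:=U\cap G$ is a finitely generated torsion-free abelian subgroup of finite index in $G$. Because $H$ has finite index in $G$, the profinite topology on $H$ is induced from that of $\widehat G$, so $\widehat H$ coincides with the closure of $H$ in $\widehat G$, which is a finite-index subgroup of $\widehat{\mathbb Z}$ and hence itself isomorphic to $\widehat{\mathbb Z}$. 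As $\widehat{\mathbb Z^n}\cong \widehat{\mathbb Z}^n$ for the finitely generated abelian group $H\cong\mathbb Z^n$, this forces $n=1$, so $G$ is virtually infinite cyclic, contradicting the hypothesis.

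For a non-degenerate splitting I choose an open normal subgroup $U\triangleleft_o\widehat G$ with $U\cap C = 1$, which exists by residual finiteness of $\widehat G$ since $C$ is finite. Then $U$ acts on $T$ with trivial edge stabilizers, and the profinite Kurosh-type subgroup theorem decomposes $U$ as a free profinite product of the intersections of $U$ with a set of conjugates of the vertex groups, together with a free profinite group $F$ of rank $1-\chi(U\backslash T)$. A direct Euler-characteristic count on the finite quotient graph $U\backslash T$, exploiting that $|C\cap U|=1$ while $[A:U\cap A]$ and $[B:U\cap B]$ may be made arbitrarily large, shows that $U$ can be shrunk so as to force $\mathrm{rank}(F)\geq 2$ in every non-degenerate case. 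Since a Sylow pro-$p$ subgroup of a non-abelian free profinite group is itself a non-abelian free pro-$p$ group (\cite[Theorem 7.7.5]{RZ}), $\widehat G$ then contains a non-abelian free pro-$p$ subgroup for every prime $p$, contradicting the other hypothesis. The main obstacle is carrying out this Euler-characteristic computation uniformly for amalgamated and HNN splittings, and in particular verifying that the quantitative condition $\mathrm{rank}(F)\geq 2$ can be achieved exactly when the splitting is non-degenerate, so that the two branches above are jointly exhaustive.
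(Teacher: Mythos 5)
There is a genuine gap at the very first step, and it is the step that carries all the weight. You pass from ``$\widehat G$ acts on a profinite tree $T$ with finite edge stabilizers and no global fixed vertex'' to ``$\widehat G$ is the fundamental group of a non-trivial reduced finite graph of profinite groups with finite edge groups, hence splits as a proper profinite amalgam or HNN-extension over a finite group.'' No such structure theorem exists in the profinite category: unlike abstract Bass--Serre theory, a profinite group acting on a profinite tree (even minimally) need not have finite quotient graph, and even when it does, it need not be the fundamental group of the graph of its vertex and edge stabilizers. The standard counterexample phenomenon is that a profinite group acting \emph{freely} on a profinite tree is only projective, not free profinite. This is precisely the subtlety the paper's introduction flags (it is not even known whether the profinite completion of a one-ended group can act on a profinite tree with finite edge stabilizers without a fixed point), and it is why the class $\A$ is defined through actions on profinite trees rather than through splittings. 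Your proposed dichotomy (virtually $\widehat{\Z}$ versus containing a non-abelian free pro-$p$ subgroup) is essentially the \emph{content} of the deep result the paper invokes --- \cite[Theorem 4.2.11]{R}, i.e.\ \cite[Theorem 3.1]{Z} --- which is available only under the hypothesis that $\widehat G$ has no non-abelian free pro-$p$ subgroups and which produces a finite normal subgroup $K$ with $\widehat G/K$ metaprocyclic, profinite Frobenius, or generalized dihedral; one cannot recover it by ``collapsing edges'' as in the abstract case. Note also that in your proposal the no-free-pro-$p$ hypothesis only enters at the very end, whereas in the paper it is exactly what licenses the structural step you are missing.

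Two smaller points, subordinate to the above. First, in the ``non-degenerate'' branch your Euler-characteristic argument asserts that $[A:U\cap A]$ and $[B:U\cap B]$ can be made arbitrarily large; this fails when the vertex groups are finite (e.g.\ $C_3\amalg C_3$), although the count giving $\mathrm{rank}(F)\geq 2$ can still be arranged with more care, so this is repairable. Second, your ``degenerate'' analysis and the deduction that $\widehat G$ virtually $\widehat\Z$ forces $G$ virtually infinite cyclic is sound and is close in spirit to the endgame of the paper's proof, which passes to a finite-index subgroup $R\leq G$ with $\widehat R$ torsion-free, rules out $\widehat\Z_\pi\rtimes\widehat\Z_\rho$ with non-trivial action by comparing $R/[R,R]$ with $\widehat R/[\widehat R,\widehat R]$, and concludes $\widehat R$ procyclic, hence $R$ infinite cyclic. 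So the tail of your argument is fine; the reduction that feeds it is not.
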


\begin{proof} Suppose $G$ is not an $\Oe$-group. Then it acts on a profinite tree $T$ with finite edge stabilizers and does not fix a vertex.  By  \cite[Theorem 4.2.11]{R} or \cite[Theorem 3.1
]{Z} if $\widehat G$ does  not fix a vertex,  then there exists a closed normal subgroup $K$ of $\widehat G$ fixing some edge such that the quotient group $\widehat G/K$   is either a projective metaprocyclic group $\widehat \Z_\pi\rtimes \widehat \Z_\rho$ ($\pi,\rho$ are set of primes with $\pi\cap \rho=\emptyset$) or an infinite soluble  profinite Frobenius group $\widehat\Z_\pi \rtimes C$  ($C$ finite cyclic) or an infinite profinite generalized dihedral group $\widehat\Z_\pi\rtimes C_2$. 

Therefore there exists a finite index subgroup $R$ of $G$ such that     $\widehat R$ is torsion free.  If $\widehat R$ is not procyclic, then $\widehat R\cong \widehat \Z_\pi\rtimes \widehat \Z_\rho$, $\pi\cap \rho=\emptyset$ with non-trivial action. It follows that $\widehat R/[\widehat R,\widehat R]=\widehat{R/[R,R]}$ can not contain $\widehat \Z$. But     $R/[R,R]$  is finitely generated infinite abelian and so contains $\Z$ implying that $\widehat{R/[R,R]}$ must contain $\widehat \Z$, a contradiction with the previous sentence. Thus $\widehat R$ is procyclic.
It follows that $R$ is finitely generated abelian and so is infinite cyclic.

\end{proof}

 The class of finitely generated residually finite $\widehat{OE}$- groups is quite large.  For example Fuchsian groups, 3-manifold groups and    all  arithmetic groups of rank $\geq 2$ are $\Oe$-groups.

If $G$   satisfies an identity, then $\widehat G$ satisfies the same identity and so does not have non-abelian free  pro-$p$ subgroups, so it is an $\Oe$-group by Proposition \ref{virtually cyclic} unless it is virtually infinite cyclic.

\bigskip

\begin{prop} \label{2a} Let $G = G_{1}\star_{H}\,G_2$ and $B = B_{1}\star_{K}\,B_2$ be   amalgamated free products of groups  with finite amalgamation.  Suppose  $G_i, B_i ( i=1,2)$ are  finitely generated residually finite $OE$-groups.  If $G\cong B$ then there exist an isomorphism $\psi: G \longrightarrow B$ such that $\psi(H)= K$, $\psi(G_1)= B_1$, $\psi(G_2)= B_2$  (up to possibly interchanging $B_1$ and $B_2$ in $B$).

\end{prop}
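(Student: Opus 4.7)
The plan is to apply Bass–Serre theory using the $OE$-hypothesis on the vertex groups, then promote set-theoretic containments to equalities via Proposition \ref{o421}, and finally read off the edge-group correspondence. Let $\phi: G \to B$ be an abstract isomorphism and let $T$ be the standard Bass--Serre tree of $B = B_1 *_K B_2$; its edge stabilisers are conjugates of the finite group $K$, and its vertex stabilisers are conjugates of $B_1$ or $B_2$. Viewing $G$ as acting on $T$ through $\phi$, the subgroups $\phi(G_i)$ inherit finite edge stabilisers, so by the $OE$-property each $\phi(G_i)$ fixes a vertex of $T$ and is therefore contained in some conjugate of $B_1$ or $B_2$.

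Swap $B_1$ and $B_2$ if necessary so that $\phi(G_1) \leq B_1^{b_1}$ for some $b_1 \in B$. The alternative $\phi(G_2) \leq B_1^{b}$ is then excluded: otherwise $B = \langle \phi(G_1), \phi(G_2)\rangle$ would lie in the normal closure $\langle\langle B_1\rangle\rangle$, whereas $B/\langle\langle B_1\rangle\rangle$ surjects onto the non-trivial quotient $B_2/\langle\langle K\rangle\rangle_{B_2}$ (proper because $K \neq B_2$). Hence $\phi(G_2) \leq B_2^{b_2}$ for some $b_2 \in B$, and $B = \langle B_1^{b_1}, B_2^{b_2}\rangle$. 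Conjugating everything by $b_1^{-1}$ and invoking Lemma \ref{generation} forces $b_2 b_1^{-1} \in B_2 B_1$; writing $b_2 b_1^{-1} = \beta_2 \beta_1$ with $\beta_i \in B_i$ and using that $\beta_2 \in B_2$ normalises $B_2$ gives $B_2^{b_2} = B_2^{\beta_1 b_1}$. Composing $\phi$ with the inner automorphism of $B$ that conjugates by $(\beta_1 b_1)^{-1}$ produces an isomorphism $\psi: G \to B$ with $\psi(G_1) \leq B_1$ and $\psi(G_2) \leq B_2$.

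To upgrade containments to equalities, apply the same step to $\psi^{-1}: B \to G$: since $B_1$ is an $OE$-group, $\psi^{-1}(B_1) \leq G_i^g$ for some $i \in \{1,2\}$ and $g \in G$, and combined with $G_1 \leq \psi^{-1}(B_1)$ this gives $G_1 \leq G_i^g$. Proposition \ref{o421} then rules out $i = 2$ (otherwise $G_1 \leq H^b$, a proper finite subgroup of $G_1$ by the non-fictitious hypothesis), and within the case $i = 1$ rules out $g \notin G_1$ for the same reason. Hence $\psi^{-1}(B_1) = G_1$, i.e.\ $\psi(G_1) = B_1$; symmetrically $\psi(G_2) = B_2$. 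Finally $\psi(H) = \psi(G_1 \cap G_2) = B_1 \cap B_2 = K$. The conceptual core is the twofold use of the $OE$-property to force each vertex group into a vertex stabiliser; the main delicate point is the bookkeeping in the middle paragraph that turns this into a simultaneous matching, which is precisely where Lemma \ref{generation} and the intersection part of Proposition \ref{o421} carry the weight.
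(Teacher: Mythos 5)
Your argument follows essentially the same route as the paper (use the $OE$-property to place the images of the vertex groups into conjugates of $B_1$ or $B_2$, normalise the conjugators via Lemma \ref{generation}, upgrade containments to equalities with Proposition \ref{o421}, and finish with $\psi(H)=\psi(G_1\cap G_2)=B_1\cap B_2=K$), but one step is justified incorrectly. To exclude the possibility that $\phi(G_2)$ is also conjugate into $B_1$, you argue that $B$ would then lie in $\langle\langle B_1\rangle\rangle$, while $B/\langle\langle B_1\rangle\rangle\cong B_2/\langle\langle K\rangle\rangle_{B_2}$ is non-trivial ``because $K\neq B_2$''. That inference is false: the normal closure of a proper subgroup can be the whole group. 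Since $OE$-groups are allowed to be finite, take $B_1=B_2=A_5$ and $K$ of order $2$; this satisfies every hypothesis of the proposition (non-fictitious, finite amalgam, finitely generated residually finite $OE$ vertex groups), yet $\langle\langle K\rangle\rangle_{B_2}=B_2$ and $B/\langle\langle B_1\rangle\rangle=1$. So as written this step gives no contradiction in exactly such cases.

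The conclusion you want is nevertheless true, and the repair is already in your hands: Lemma \ref{generation} states that $B=\langle B_1, B_i^{g}\rangle$ forces $i=2$ (and $g\in B_2B_1$), so if both $\phi(G_1)$ and $\phi(G_2)$ lay in conjugates of $B_1$, conjugating would give $B=\langle B_1,B_1^{g}\rangle$, which the lemma forbids. In other words, you can let the lemma decide both which factor $\phi(G_2)$ goes into and the shape of the conjugator in one stroke, which is how the paper proceeds (the paper first pins down $\psi(G_1)=B_1$ exactly by the symmetric argument with Proposition \ref{o421}, then applies Lemma \ref{generation} to $\psi(G_2)$); your reordering --- both containments first, both equalities afterwards --- is otherwise fine, as is the final identification of the edge groups.
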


\begin{proof} Let $\psi: G\rightarrow  B$ be an isomorphism. Since $\psi(G_1)$ is an OE-group, it does not split as an amalgamated free product or HNN-extension over  a finite subgroup and so    is conjugate into $B_1$ or $B_2$ say $\psi(G_1)\leqslant {B_{1}}^{b}$ for some $b\in B$. Recall that $\tau_b$ denote the inner automorphism that  corresponds to conjugation by $b$ . Thus replacing $\psi$ by $\tau_{b^{-1}} \circ \psi$,  we may assume that  $\psi(G_1) \leqslant  B_1$. Symmetrically  $\psi^{-1}(B_1)$ is in $G_1^g$ for some $g\in G$. But $\psi^{-1}\psi(G_1)=G_1$  and so $\psi( G_1)= B_1$.   

Next we  show that in addition we may assume that $\psi(G_2)= B_2.$ Similarly as in the preceding paragraph  $\psi(G_2)$ is conjugate to $B_1$ or $B_2$.  Then by Lemma \ref{generation} there exists $b=b_2b_1$ such that $\psi(G_2)=B_2^{b}=B_2^{b_1}$ with $b_2\in B_2, b_1\in B_1$.  Thus replacing $\psi$ with its composite with conjugation by $b_1$ we have   $\psi(G_2)=B_2$.

Now we have $\psi(H) = B_{1} \cap B_{2} \leqslant K$.
 Similarly we have $\psi^{-1}(K)\leqslant H$. Thus   $\psi(H)=K$ and the proof is complete.

\end{proof}

Now we shall prove the profinite version of Proposition \ref{2a}. 

\begin{prop} \label{compart1} Let $G = G_{1}\amalg_{H}\,G_2$ and $B = B_{1}\amalg_{K}\,B_2$ be   profinite amalgamated free products of  profinite groups  with finite amalgamation. Suppose  $G_i,B_i\  (i=1,2)$ are OE-groups.   If $G\cong  B $ then there exist an isomorphism $\psi:G\rightarrow  B$ such that $\psi(H) = K$, $\psi(G_1) =B_{1}$ and $\psi(G_2) ={B_{2}}^{b}$ for some $b\in B$ (up to possibly interchanging $B_1$ and $B_2$ in $B$).

\end{prop}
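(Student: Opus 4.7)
The plan is to mimic the argument for Proposition \ref{2a}, replacing the appeal to Stallings' theorem with the definition of an OE profinite group and using Proposition \ref{o421}(ii) in place of its abstract counterpart. Throughout I work with the standard profinite tree $S(B) = B/B_1 \cup B/B_2 \cup B/K$ for the decomposition $B = B_1 \amalg_K B_2$, and analogously with $S(G)$ for $G$; both trees have finite edge stabilizers since $H$ and $K$ are finite.

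First I use that $\psi(G_1)$ is a profinite OE-group acting on $S(B)$ with finite edge stabilizers, so it fixes a vertex; hence $\psi(G_1) \leq B_i^b$ for some $i$ and $b \in B$. After possibly interchanging $B_1$ and $B_2$ and replacing $\psi$ with $\tau_{b^{-1}} \circ \psi$, I may assume $\psi(G_1) \leq B_1$. Applying the same argument to $\psi^{-1}(B_1)$ acting on $S(G)$ gives $\psi^{-1}(B_1) \leq G_j^g$. If $j = 2$ or $g \notin G_1$, the containment $G_1 \leq \psi^{-1}(B_1) \leq G_j^g$ combined with Proposition \ref{o421}(ii) forces $G_1 \leq H^{b''}$ for some $b'' \in G_1$, contradicting the non-fictitious assumption $H \neq G_1$. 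Therefore $j = 1$ and $g \in G_1$, yielding $\psi(G_1) = B_1$.

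Next, the OE-property places $\psi(G_2) \leq B_i^c$ for some $i$ and $c \in B$, and similarly $\psi^{-1}(B_2) \leq G_j^g$ for some $j$ and $g \in G$. The same argument as above, now with $G_2$ in place of $G_1$, forces $j = 2$, so $B_2 \leq \psi(G_2)^{\psi(g)}$. If $i = 1$, combining gives $B_2 \leq B_1^{c\psi(g)}$, and Proposition \ref{o421}(ii) then forces $B_2$ into a conjugate of the finite group $K$, contradicting $K \neq B_2$. So $i = 2$, and the combined containments read $B_2 \leq B_2^d$ with $d = c\psi(g)$. A further application of Proposition \ref{o421}(ii) rules out $d \notin B_2$ (which would again push $B_2$ into a conjugate of $K$), hence $d \in B_2$, the inclusions collapse, and $\psi(G_2) = B_2^b$ with $b = \psi(g)^{-1}$.

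Finally, $\psi(H) = \psi(G_1) \cap \psi(G_2) = B_1 \cap B_2^b \leq K^{b'}$ for some $b' \in B_1$ by Proposition \ref{o421}(ii). The symmetric argument applied to $\psi^{-1}(K)$ gives $|H| = |K|$, so $\psi(H) = K^{b'}$ by equality of finite cardinalities. Composing $\psi$ with the inner automorphism $\tau_{b'^{-1}}$ preserves $B_1 = \psi(G_1)$ (since $b' \in B_1$), sends $K^{b'}$ to $K$, and replaces $B_2^b$ by another conjugate of $B_2$, producing an isomorphism of the required form. The main new obstacle, compared with the abstract Proposition \ref{2a}, is that one cannot arrange $\psi(G_2) = B_2$ exactly but only up to conjugation: the abstract proof uses Lemma \ref{generation} at this step, whose profinite analogue is not established here, and I instead bypass it by running Proposition \ref{o421}(ii) simultaneously through $\psi$ and $\psi^{-1}$ while exploiting the finiteness of $K$ and the properness $K \neq B_2$.
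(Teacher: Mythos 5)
Your proof is correct and follows essentially the same route as the paper: use the OE-property to conjugate each $\psi(G_i)$ (and $\psi^{-1}(B_i)$) into a vertex group of the standard profinite tree, then apply Proposition \ref{o421}(ii) together with the finiteness of $H$ and $K$ and non-fictitiousness to force $\psi(G_1)=B_1$, $\psi(G_2)=B_2^{b}$, and finally $\psi(H)=K$ after composing with an inner automorphism by an element of $B_1$. The only difference is that you spell out the applications of Proposition \ref{o421}(ii) that the paper leaves implicit in its ``symmetrically'' steps.
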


\begin{proof} Let $\psi: G\rightarrow  B$ be an isomorphism. Then  $\psi(G_1)$ is conjugate into $B_1$ or $B_2$, say $\psi(G_1)\leqslant {B_{1}}^{w}$ for some $w\in B$ (cf. \cite[Example 6.3.1]{R}). Thus replacing $\psi$ by $\tau_{w^{-1}} \circ \psi$, if necessary, we may assume that $\psi(G_1) \leqslant B_1$. Then symmetrically  $\psi^{-1}(B_1)$ is in $G_1^g$ for some $g\in G$. 
But $\psi^{-1}\psi(G_1)=G_1$  and so $\psi(G_1)=B_1$. 

 Similarly  $\psi(G_2)$ is conjugate into $B_1$ or $B_2$. But $ G_1$ and $ G_2$ are not conjugate, since otherwise by Proposition \ref{o421} $G_1$ is conjugate to $H$ and  $G$ is fictitious. Thus  $\psi(G_2)\leqslant {B_{2}}^{b}$ for some $b\in B$ (cf. \cite[Example 6.3.1]{R}).  Then symmetrically  $\psi^{-1}(B_2^b)$ is in $G_2^g$ for some $g\in G$. 
But $\psi^{-1}\psi(G_2)=G_2$  and so $\psi(G_2)=B_2^b$.

By Proposition \ref{o421} we have: $$\psi(H) = B_{1} \cap  B_{2}^{b} \leqslant K^{b_1} \,(b_1\in B_1).$$ Similarly we have $\psi^{-1}(K)\leqslant H^{g_1}$ for some $g_1 \in G_1.$ Since $H$ and $K$ are finite we have $\psi(H)=K^{b_1},\, b_1\in B_1.$ Thus replacing $\psi$ by $\tau_{{b_1}^{-1}} \circ \psi$,  we get   $\psi(H)=K$ as required.

\end{proof}

\begin{rem} \label{finite-by-cyclic}
\begin{enumerate}

\item[(i)]
Proposition \ref{2a}  also holds if we assume that  $G_i, B_i$ ($i=1,2)$ are semidirect products   $M_i\rtimes \Z$, $N_i\rtimes \Z$ with  $M_i,N_i$ finite such that $M_i\neq H$, $N_i\neq K$. 

\item[(ii)]
Similarly, Proposition \ref{compart1}  also holds if we assume that $G_i, B_i$ ($i=1,2)$ are semidirect products   $M_i\rtimes \widehat\Z$, $N_i\rtimes \widehat\Z$  with $M_i,N_i$ finite such that $M_i\neq H$, $N_i\neq K$.

\end{enumerate}

\smallskip
The proofs are the same using Proposition \ref{o421}. 

\end{rem}

\section{Detecting JSJ-decomposition}

\begin{deff}\label{accessible}
A group $G$ is called accessible if there exists a natural number $n=n(G)$ such that for any splitting of $G$ as the fundamental group $\pi_1(\G, \Gamma)$ of a reduced graph of groups $(\G, \Gamma)$ with finite edge groups,  the size of $\Gamma$ is bounded by $n(G)$.\end{deff}

 Thus if $G$ is  accessible, then there exists $(\G, \Gamma)$ of maximal size where vertex groups are either finite or one-ended and therefore can not split further. A representation of $G$ as the fundamental group $\pi_1(\G,\Gamma)$ of such graph of groups will be called a JSJ-decomposition of $G$.   Note that  every  finitely presented group is accessible (see [D]) and so admits a JSJ-decomposition.

Recall that the class $\A$  consists of all finitely generated residually finite accessible groups such that if $G$ splits as an amalgamated free product $G=G_1*_HG_2$ or an HNN-extension $G=HNN(G_1,H,t)$ over a finite group $H$ then $G_i\in \A$ and whenever $G\in \A$ is one-ended, then $\widehat G$ can not act on a profinite tree with finite edge stabilizers without a global fixed point (see Definition \ref{Profinite tree} for the definition of a profinite tree). We first show that $\A$ is closed for free products with finite amalgamation and HNN-extensions with finite associated subgroups.

\begin{prop}\label{closed for constructions} Let $H$ be a finite subgroup of groups $G_1,G_2\in\A$. Then $G_1*_HG_2$ and $HNN(G_1,H,t)$ are  in $\A$.\end{prop}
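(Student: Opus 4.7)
The plan is to verify the three defining properties of $\A$ for $G=G_1*_H G_2$ (the HNN case is entirely analogous): $G$ is finitely generated, residually finite, and accessible; if $G$ is one-ended then $\widehat G$ is an OE-group; and every further splitting of $G$ over a finite subgroup has vertex groups in $\A$. Finite generation is immediate; residual finiteness of amalgamated free products over finite subgroups is classical (e.g.\ Cohen, Baumslag); and accessibility is inherited, with $n(G)\le n(G_1)+n(G_2)+1$. Since the amalgamation is non-fictitious and $H$ is finite, $G$ splits non-trivially over a finite subgroup, so it has more than one end and the one-ended condition is vacuous.

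The substantive step is the last one. I would first assemble a reduced graph-of-groups decomposition $(\G,\Gamma)$ of $G$ from JSJ-decompositions $(\G_i,\Gamma_i)$ of $G_i$ by joining them via a new edge $e$ with $\G(e)=H$: since $H$ is finite, it fixes a vertex $v_i$ on the Bass-Serre tree of $(\G_i,\Gamma_i)$, so after replacing $H$ by a conjugate we may arrange $H\le \G_i(v_i)$; collapsing any redundant edges then yields a JSJ-decomposition of $G$ whose one-ended vertex groups are precisely those of $(\G_1,\Gamma_1)$ and $(\G_2,\Gamma_2)$, all of which belong to $\A$ by hypothesis. Let $T$ be the Bass-Serre tree of $(\G,\Gamma)$. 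Each one-ended vertex group $V$ of $(\G,\Gamma)$ acts on the Bass-Serre tree of $A*_K B$ with finite edge stabilizers, hence fixes a vertex there and is therefore conjugate in $G$ into $A$ or into $B$. A Bass-Serre analysis of the action of $A$ on $T$, using Proposition \ref{o421} to control the intersections $A\cap V^g$ and $A\cap H^g$, then shows that each vertex stabilizer of $A$ on $T$ is either finite or a full conjugate of a one-ended vertex group of $(\G,\Gamma)$; the same holds for $B$ (and for the vertex group in the HNN case).

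The main obstacle is promoting this structural description of $A$ and $B$ to membership in $\A$, which I would do by induction on the accessibility bound $n(G)$. The subgroup $A$ is finitely generated as a vertex group of a finite-edge splitting of a finitely generated group, residually finite as a subgroup of $G$, and accessible with bound at most $n(G)$. If $A$ is one-ended, its inherited decomposition on $T$ must be trivial, so $A$ is contained in a conjugate of a one-ended vertex group $V$ of $(\G,\Gamma)$; using $N_G(A)=A$ from Proposition \ref{o421}, one concludes that $A$ equals that conjugate, whence $A\in\A$ and $\widehat A$ is an OE-group. If $A$ splits further over a finite subgroup, applying the same Bass-Serre argument to this new splitting shows that its vertex groups are again built from finite groups and from one-ended vertex groups of $(\G,\Gamma)$; these lie in $\A$, and the induction hypothesis then gives that the new vertex groups lie in $\A$, completing the verification that $A\in\A$.
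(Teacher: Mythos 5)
Your proposal is correct in outline, but it does considerably more than the paper's own argument, and the two part ways after the gluing step. The paper's proof consists only of the construction in your second paragraph: take JSJ-decompositions $(\G_i,\Gamma_i)$ of $G_1,G_2$, conjugate the finite group $H$ (and, in the HNN case, $H^t$, replacing $t$ by $tg$) into vertex groups, join the corresponding vertices by a new edge with edge group $H$, and declare the resulting graph of groups a JSJ-decomposition of $G$ whose vertex groups are exactly those of the $(\G_i,\Gamma_i)$; membership in $\A$ is then read off directly, i.e.\ the paper implicitly treats the defining condition of $\A$ as a condition on the one-ended vertex groups of the JSJ-decomposition (as announced in the abstract). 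You instead verify the recursive definition literally: for an arbitrary further splitting $G=A*_KB$ over a finite group you run a Bass--Serre/Kurosh-type analysis of the action of $A$ on the JSJ tree, using Proposition~\ref{o421} to show each infinite vertex stabilizer of $A$ is a full conjugate of a one-ended JSJ vertex group, and then induct on the accessibility bound. That analysis is sound (the key intersection computations do work exactly as you indicate), and what it buys is precisely the equivalence, left tacit in the paper, between the recursive definition of $\A$ and its JSJ characterization; the cost is that your final induction is only sketched, and to run it cleanly the inductive statement should be strengthened to ``any finitely generated residually finite accessible group which is the fundamental group of a finite graph of groups with finite edge groups and vertex groups in $\A$ lies in $\A$'' (of which the proposition is the one-edge case), and you should note explicitly that the JSJ vertex groups of $G_1,G_2$ lie in $\A$ because a multi-edge JSJ-decomposition is realized by successive one-edge splittings over finite subgroups. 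If you are content with the paper's reading of the definition, your second paragraph alone is the whole proof.
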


\begin{proof}  Let $G_i=\pi_1(\G_i,\Gamma_i)$ be a JSJ-decomposition of $G_i$, $i=1,2$. Since $H$ is finite, by \cite[Theorem 3.10]{ZM1} or \cite[Theorem 7.1.2]{R} $H^{g_i}\leq \G_i(v_i)$, for some $g_i\in G_i$, $v_i\in V(\Gamma_i)$, $i=1,2$. Conjugating all vertex and edge groups by $g_i$ we have the graph of groups $(g_i\G_i g_i^{-1},\Gamma_i)$  with  $G_i=\pi_1(g_i\G_i g_i^{-1},\Gamma_i)$ and so  w.l.o.g. we may assume that $H\leq \G_i(v_i)$. If $G=G_1*_HG_2$ then connecting $v_1$, $v_2$ by an edge $e$ and setting $\G(e)=H$ we obtain a JSJ-decomposition of $G=G_1*_HG_2$ with vertex groups being the union of vertex groups of $(\G_1,\Gamma_1)$ and $(\G_2,\Gamma_2)$. This shows that $G_1*_HG_2\in \A$. If $G=HNN(G_1,H,t)$ then $H^t\leq \G(w)^g$ for some $g\in G_1$, $w\in V(\Gamma_1)$ and so $G=HNN(G_1,H,tg)$. Then conneting $v_1$ with $w$ by an edge $e$ and setting  $\G(e)=H$ we obtain a JSJ-decomposition of $HNN(G_1,H,tg)=HNN(G_1,H,t)$ with vertex groups being the  vertex groups of $(\G_1,\Gamma_1)$ that shows that $HNN(G_1, H, t)\in\A$. The proof is finished. 

\end{proof}

\begin{lem}\label{relative main} Let  $G=\pi_1(\G, \Gamma)$ be the fundamental group of a finite graph of finitely generated residually finite groups with finite edge groups.  Suppose $\widehat G$ acts on a profinite tree $T$ such that for each $v\in V(\Gamma)$ the profinite completion $\widehat{\G(v)}$ fixes a vertex of $T$ and  for any edge $f$ of $T$ one has $|\widehat G_f|< |\G(e)|$ for each edge group of $(\G,\Gamma)$. Then $\widehat G$ stabilizes a vertex of $T$. 

\end{lem}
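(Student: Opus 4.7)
The plan is to build a $\widehat G$-equivariant morphism of profinite graphs $\widehat\phi\colon \widehat S\to T$, where $\widehat S$ is the standard profinite tree of $\widehat G$ associated with the profinite graph of profinite groups $(\widehat \G, \Gamma)$, with the crucial property that $\widehat\phi$ sends every edge of $\widehat S$ to a vertex of $T$. Because $\widehat S$ is a connected profinite tree, such a morphism must have image a single vertex of $T$, which by equivariance is $\widehat G$-fixed. The trivial case $\Gamma$ having no edges is immediate ($\widehat G=\widehat{\G(v)}$ already fixes a vertex), so assume $\Gamma$ has at least one edge; by connectedness of $\Gamma$ every vertex then lies on some edge.

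The first real step is to pin down unique fixed vertices. For each $e\in E(\Gamma)$ the finite subgroup $\G(e)\le \widehat{\G(d_0(e))}$ fixes at least one vertex of $T$ (any vertex fixed by $\widehat{\G(d_0(e))}$ does). The hypothesis $|\widehat G_f|<|\G(e)|$ forbids $\G(e)$ from fixing any edge of $T$, so its fixed-point set is an edge-free profinite subtree and hence a single vertex $u_e$. For each $v\in V(\Gamma)$, any vertex fixed by $\widehat{\G(v)}$ is also fixed by $\G(e)\le \widehat{\G(v)}$ for any edge $e$ incident to $v$, so it must equal $u_e$; combined with the hypothesis that $\widehat{\G(v)}$ does fix a vertex, this gives a unique fixed vertex $v_T$ of $\widehat{\G(v)}$, independent of the incident $e$ used to identify it.

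I then define $\widehat\phi(g\widehat{\G(v)})=g\cdot v_T$ on vertices and $\widehat\phi(g\G(e))=g\cdot u_e$ on edges of $\widehat S$. Well-definedness, continuity and $\widehat G$-equivariance are all immediate from the fixation properties and the orbit-map structure of $\widehat S$. The graph-morphism property amounts to showing the two endpoints $g\widehat{\G(d_0(e))}$ and $gt_e\widehat{\G(d_1(e))}$ of an edge $g\G(e)$ map to the same vertex $g\cdot u_e$ of $T$: the defining relation $\partial_0(\G(e))=t_e\partial_1(\G(e))t_e^{-1}$ in $\widehat G$, together with uniqueness of the fixed vertex of $\partial_1(\G(e))$, forces $(d_1(e))_T=t_e^{-1}u_e$, and the claim follows.

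At the end, since $\widehat\phi$ sends all of $\widehat S$ into $V(T)$, its image is a connected profinite subgraph of $T$ containing no edges; the chain-complex definition of connectedness applied to a vertex-only profinite graph then forces the image to be a single vertex $u$, and by equivariance $\widehat G$ fixes $u$. The part most in need of care, and the main obstacle, is transferring the elementary statement ``a connected graph whose morphism collapses every edge is constant'' to the profinite setting; I would handle this either by invoking Ribes's treatment of morphisms of profinite graphs in \cite{R}, or by reducing to the finite-quotient graphs of $\widehat S$ and $T$, where the abstract argument applies, and taking the inverse limit.
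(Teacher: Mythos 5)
Your proof is correct in substance, and its engine is the same as the paper's: residual finiteness embeds the finite edge groups into $\widehat G$, the hypothesis $|\widehat G_f|<|\G(e)|$ prevents any embedded copy of $\G(e)$ from stabilizing an edge of $T$, and since a group fixing two vertices of a profinite tree fixes the geodesic between them (\cite[Corollary 4.1.6]{R}), each copy of $\G(e)$ --- and hence each $\widehat{\G(v)}$ --- has a unique fixed vertex; the relation $\partial_0(g)=t_e\partial_1(g)t_e^{-1}$ then transfers these fixed vertices across edges. Where you differ is the final assembly: you package everything as a $\widehat G$-equivariant morphism from the standard profinite tree $\widehat S$ to $T$ collapsing every edge, and finish with the facts (available in \cite{R}, as you note) that morphisms preserve connectedness and that a connected profinite graph with no edges is a single vertex. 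The paper argues more directly: all vertex groups fix one common vertex $w$ (adjacent fixed vertices coincide by the geodesic argument, and $\Gamma$ is finite and connected), then $G$ is viewed as an iterated HNN-extension over a maximal subtree and each stable letter $t_e$ fixes $w$ because $\G(e)$ fixes $[t_e^{-1}w,w]$, so $\widehat G$ fixes $w$ by the presentation (\ref{presentation}). The two assemblies are interchangeable; yours is slightly more structural, the paper's avoids any discussion of edge-collapsing morphisms and connectedness of images.

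One step you take for granted, which the paper proves as its opening move: that $\widehat G$ is indeed the profinite fundamental group of the graph of the \emph{full} profinite completions $\widehat{\G(v)}$ with edge groups $\G(e)$ --- equivalently, that the closure of $\G(v)$ in $\widehat G$ is $\widehat{\G(v)}$ --- so that your standard tree $\widehat S$ exists with the stabilizers you use. This needs the finiteness of the edge groups beyond the order count: a finite-index normal subgroup of $G$ meeting the edge groups trivially is a free product of its intersections with conjugates of the vertex groups, whence by \cite[Corollary 3.1.6]{RZ} the profinite topology of $G$ induces the full profinite topology on each $\G(v)$. Your argument would survive without this identification (one can run it with the closures $\overline{\G(v)}$, which are exactly the images through which the groups $\widehat{\G(v)}$ of the hypothesis act on $T$), but as written this justification is missing and should be added.
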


\begin{proof} Note first that since the edge groups of $G$ are finite, there exists a normal sugroup $U$ of $G$ of finite index that intersect them trivially and so $U$ is a free product of its intersection with conjugates of vertex groups; so by \cite[Corollary 3.1.6]{RZ} the profinite topology of $G$ induces the full profinite topology on the vertex groups. This means that the profinite completion of vertex groups coincides with their closure in $\widehat G$. By  \cite[Theorem 2.10]{ZM1} or \cite[Theorem 4.1.8]{R} $\G(e)$ fixes a vertex $w$ in $T$.  By hypothesis the vertex groups $\widehat{\G(d_0(e))}$ and $\widehat{\G(d_1(e))}$ fix some vertices $v_0,v_1$ of $T$ respectively and therefore  by \cite[Corollary 4.1.6]{R} $\G(e)$ fixes the geodesics $[v_0,w]$ and $[v_1, w]$.   Then our hypothesis on the edge groups of $(\G,\Gamma)$ imply that $v_0=w=v_1$.  Since $\Gamma$ is connected,  we deduce that all vertex groups of $(\G, \Gamma)$ fix $w$.  Let $D$ be a maximal subtree of $\Gamma$.  Then we can view $G$ as HNN-extension $G=HNN(H, \G(e),  t_e)$,  $e\in \Gamma\setminus D$, where $H=\pi_1(\G,D)$ is the fundamental group of the tree of groups obtained by  restriction of $(\G,\Gamma)$ to $D$.  Since $H$ is generated by vertex groups, $H$ fixes $w$.  Then $\widehat H^{t_e}$ fixes $t_e^{-1}w$ and $\widehat H\cap \widehat H^{t_e}=\G(e)$  fixes $[t_e^{-1}w,w]$ by  \cite[Corollary 4.1.6]{R},   so by hypothesis $t_ew=w$ for each $e\in \Gamma\setminus D$.  Thus the result follows from the presentation (1). 

\end{proof}

{\it Proof of Theorem \ref{free product decomposition}}.  Suppose $\widehat G=A\amalg B$ splits as a free profinite product.  Let $S(\widehat G)$ be a standart profinite tree associated with this free profinite product.  By Lemma \ref{relative main} if all edge groups $\G(e)$ are non-trivial then $\widehat G$ fixes   a vertex,  a contradiction.  Therefore at least one edge group $\G(e)$ is trivial.  As $G=G_1*_{\G(e)}G_2$ splits  as a free amalgamated product  or  as $G=HNN(G_1,\G(e),t)$ over $\G(e)$ we deduce that $G=G_1* G_2$ or $G_1* \langle t\rangle$.

\bigskip

The next corollary answers a question of Andrei Jaikin asked in private communication.

\begin{cor}\label{virtually free} Let $G$ be a finitely generated virtually free group.  Then $G$ splits as a free product if and only if $\widehat G$ splits as a free profinite product. 

\end{cor}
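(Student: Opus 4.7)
The strategy is to reduce Corollary \ref{virtually free} to Theorem \ref{free product decomposition} by showing that every finitely generated virtually free group $G$ belongs to the class $\A$; after that the conclusion is immediate. The reverse direction of the corollary, namely that a free product decomposition of $G$ yields a free profinite product decomposition of $\widehat G$, is automatic from the universal property of profinite completion (if $G = G_1 * G_2$ then $\widehat G = \widehat G_1 \amalg \widehat G_2$), so the content to supply is the membership $G\in \A$.

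The first step is to verify that every finite group $F$ lies in $\A$. Such an $F$ is trivially finitely generated, residually finite, and accessible. The condition on splittings is vacuous: a non-fictitious amalgamated free product or HNN-extension over any proper subgroup is always infinite, so a finite group admits no such decomposition. The one-ended clause is likewise vacuous, since a finite group fixes a vertex in any action on a (profinite) tree by \cite[Theorem 4.1.8]{R}. Thus $F \in \A$.

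The second step invokes the Karrass--Pietrowski--Solitar theorem, which says that the finitely generated virtually free groups are precisely the fundamental groups of finite graphs of finite groups. Writing $G = \pi_1(\G, \Gamma)$ with $(\G,\Gamma)$ a finite graph of finite groups, I would proceed by induction on $|E(\Gamma)|$. If $|E(\Gamma)|=0$ then $G$ is finite, handled above. Otherwise, pick an edge $e$ of $\Gamma$: either removing $e$ disconnects $\Gamma$, in which case $G = G_1 *_{\G(e)} G_2$ with $G_1,G_2$ fundamental groups of the two resulting subgraphs of groups, or it does not, in which case $G = \mathrm{HNN}(G_1, \G(e), t)$ with $G_1$ the fundamental group of $(\G, \Gamma\setminus\{e\})$. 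In both cases the smaller pieces are themselves fundamental groups of finite graphs of finite groups with strictly fewer edges, hence finitely generated virtually free and, by the induction hypothesis, in $\A$. Proposition \ref{closed for constructions} then gives $G\in \A$, and Theorem \ref{free product decomposition} completes the proof.

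There is no real obstacle here beyond citing Karrass--Pietrowski--Solitar and arranging the induction; the base case (finite groups in $\A$) and the inductive step (closure of $\A$ under the two basic constructions, already established in Proposition \ref{closed for constructions}) carry the argument.
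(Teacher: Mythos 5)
Your proposal is correct and follows essentially the same route as the paper: invoke Karrass--Pietrowski--Solitar to present $G$ as the fundamental group of a finite graph of finite groups, conclude $G\in\A$, and apply Theorem \ref{free product decomposition}. The only difference is that you spell out, via induction on the number of edges together with Proposition \ref{closed for constructions} and the (easy) membership of finite groups in $\A$, the step that the paper leaves implicit in the phrase ``and therefore is in $\A$''.
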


\begin{proof} By a result of Karrass, Pietrovski and Solitar \cite{KPS-73} $G$ is the fundamental group of a finite graph of finite groups and therefore is in $\A$. Hence the result follows from Theorem \ref{free product decomposition}.

\end{proof}

\begin{prop} \label{maximal edge groups} Let $G=\pi_1(\G, \Gamma)$, $B=\pi_1(\B, \Delta)$ be the fundamental groups of reduced finite graphs of finitely generated residually finite groups with finite edge groups and  $\nu:\widehat B\longrightarrow \widehat G$ be an isomorphism.  Suppose that for any $v\in V(\Delta)$ the group  $\nu(\widehat{\B(v)})$ is conjugate into $\widehat{\G(w)}$ for some $w\in V(\Gamma)$ and for any $w\in V(\Gamma)$ the group $\nu^{-1}(\widehat{\G(w)})$ is conjugate into $\widehat{\B(v)}$ for some $v\in V(\Delta)$.  Then there  are bijections $\epsilon:E(\Gamma)\longrightarrow E(\Delta)$, $\varphi: V(\Gamma)\longrightarrow V(\Delta)$ such that   $\widehat{\G(w)}\cong \widehat{\B(\varphi(w))}$ for all $w\in V(\Gamma)$  and for every  edge group $\G(e)$ one has   $\G(e)\cong \B(\epsilon(e))$.

\end{prop}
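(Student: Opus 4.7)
My plan is to generalize the strategy of Proposition~\ref{compart1} from a single amalgamated product to arbitrary finite graphs of profinite groups. I first define $\varphi:V(\Gamma)\to V(\Delta)$: for each $w\in V(\Gamma)$ the hypothesis supplies some $v\in V(\Delta)$ and $b\in\widehat B$ with $\nu^{-1}(\widehat{\G(w)})\leq \widehat{\B(v)}^{b}$, and I set $\varphi(w)=v$. The goals are then to show that $\varphi$ is well-defined, is a bijection, and implements an isomorphism $\widehat{\G(w)}\cong \widehat{\B(\varphi(w))}$, after which I produce an analogous edge bijection~$\epsilon$.

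The heart of the vertex argument is a sandwich. Combining the two directions of the hypothesis yields $w'\in V(\Gamma)$, $b\in\widehat B$ and $g\in\widehat G$ such that
\[
\widehat{\G(w)} \;\leq\; \nu\bigl(\widehat{\B(\varphi(w))}\bigr)^{\nu(b)} \;\leq\; \widehat{\G(w')}^{g},
\]
viewed as nested subgroups of $\widehat G$. I will show $w=w'$ by letting $\widehat{\G(w)}$ act on the standard profinite tree $T_G=S(\widehat G)$: being contained in $\widehat{\G(w')}^{g}$ forces it to fix both $w$ and $g^{-1}w'$, hence the geodesic between them. Since edge stabilizers of $T_G$ are finite, an infinite $\widehat{\G(w)}$ immediately gives $g^{-1}w'=w$. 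For finite $\widehat{\G(w)}$, reducedness of $(\G,\Gamma)$ gives $\widehat{\G(e)}\lneq\widehat{\G(w)}$ for every non-loop edge $e$ incident to $w$, so $\widehat{\G(w)}$ cannot stabilize any adjacent edge of $T_G$; its fix set reduces to $\{w\}$ (after a preliminary simplification should any trivial loop with $\G(e)=\G(w)$ occur), and we again get $g^{-1}w'=w$, hence $w=w'$ in $\Gamma$ and $g\in\widehat{\G(w)}$. The chain then squeezes to $\widehat{\G(w)} = \nu(\widehat{\B(\varphi(w))})^{\nu(b)} = \widehat{\G(w)}^{g}$, yielding $\widehat{\G(w)}\cong\widehat{\B(\varphi(w))}$. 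Injectivity of $\varphi$ uses the same argument; surjectivity follows by applying the symmetric half of the hypothesis at each $v\in V(\Delta)$.

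For the edge bijection I transport the action: $\widehat G$ acts on $T_B$ via $\nu^{-1}$ with quotient $\Delta$, presenting $\widehat G$ as a second reduced graph-of-profinite-groups $(\widehat\G'',\Delta)$ whose edge groups $\widehat\G''(e)$ are conjugates of $\B(e)$ in $\widehat B$ (hence abstractly isomorphic to $\B(e)$) and whose vertex groups are, by the vertex bijection, $\widehat B$-conjugates of the vertex groups of $(\widehat\G,\Gamma)$. Each $\G(e)$ embeds through $\partial_0,\partial_1$ into the intersection $\widehat{\G(d_0 e)}\cap \widehat{\G(d_1 e)}^{t_e}$, and iterating the tree-stabilizer reasoning of Proposition~\ref{o421}(ii) along the geodesic in $T_B$ connecting the corresponding pair of vertices shows this intersection is conjugate into some $\B(e')$. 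The symmetric argument gives the reverse embedding, and a sandwich of finite groups (trivially Hopfian) yields $\G(e)\cong \B(\epsilon(e))$. Bijectivity of $\epsilon$ on underlying sets follows from $|E(\Gamma)|=|E(\Delta)|$, which is a consequence of the equality of first Betti numbers of $\widehat G\cong \widehat B$ combined with the already-established $|V(\Gamma)|=|V(\Delta)|$.

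The main obstacle I expect is the edge step: verifying that $(\widehat\G'',\Delta)$ is actually reduced, running the iterated-intersection argument cleanly, and upgrading the resulting family of embeddings into a coherent bijection rather than a merely set-valued correspondence. Finite vertex groups, and in particular loops whose edge group coincides with a vertex group (permitted by the paper's definition of reducedness), are the delicate cases that will demand extra care.
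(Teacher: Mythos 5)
Your vertex argument is essentially the paper's: the two halves of the hypothesis are composed and squeezed, via the action on the standard profinite tree, to give $\nu(\widehat{\B(\varphi(w))})=\widehat{\G(w)}^{g}$, with reducedness ruling out the degenerate case where the group falls into an edge stabilizer; this is exactly how the paper defines $\varphi$ (and your loop caveat is shared by the paper). The genuine gap is in the edge step. Your plan is: each $\G(e)$ embeds, up to conjugacy, into some $\B(e')$, symmetrically each $\B(e')$ into some $\G(e'')$, $|E(\Gamma)|=|E(\Delta)|$, and then ``a sandwich of finite groups yields $\G(e)\cong\B(\epsilon(e))$''. But the composite $\G(e)\hookrightarrow\B(e')\hookrightarrow\G(e'')$ has no reason to return to the edge $e$: unlike vertex groups, which the geodesic/reducedness argument forces back to the original vertex, an edge group can be properly contained in a conjugate of another edge group, so nothing closes the squeeze. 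Even combinatorially your data is insufficient: the multisets $\{C_2,C_4\}$ and $\{C_4,C_4\}$ have equal cardinality and every member of each embeds into a member of the other, yet no type-preserving bijection exists. What mutual embeddability plus the edge count actually pins down is only the edge groups of \emph{maximal} order; the iterated use of Proposition \ref{o421}(ii) along geodesics in $T_B$ reproduces the ``embeds into some edge group'' statement, not the conclusion. You did flag this upgrade as the expected obstacle, but the proposal contains no mechanism to overcome it.

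This is precisely where the paper does something absent from your sketch: an induction on the maximal order $n$ of an edge group. After matching the maximal edge groups (the only ones the sandwich controls), it collapses each connected component $C$ of the subgraph $\Omega\subseteq\Gamma$ spanned by the edges with $|\G(e)|=n$ to a single vertex carrying $\pi_1(\G,C)$, and likewise in $\Delta$, obtaining decompositions of $G$ and $B$ over graphs $\overline\Gamma,\overline\Delta$ all of whose edge groups have order $<n$. The hypothesis of the proposition for these collapsed decompositions is then verified by Lemma \ref{relative main}: the profinite completion of each collapsed vertex group fixes a vertex of the other standard tree, because its internal edge groups (of order $n$) strictly exceed every edge stabilizer of that tree. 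Induction then matches the smaller edge groups, and combining with the maximal-order matching gives $\epsilon$. Without this collapsing-and-induction step (or some substitute playing the same role), your edge bijection is not established. Your count $|E(\Gamma)|=|E(\Delta)|$ via killing the vertex groups and comparing ranks of $\widehat{\pi_1(\Gamma)}\cong\widehat{\pi_1(\Delta)}$ is fine and is also how the paper argues.
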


\begin{proof}   Since $\nu(\widehat{\B(v)})$ is conjugate into $\widehat{\G(w)}$ for some $w\in V(\Gamma)$ and
$\nu^{-1}(\widehat{\G(w)})$ is conjugate   into $\widehat{\B(u)}$ for some $u\in V(\Delta)$,  we have $u= v$, because otherwise by \cite[Theorem 7.1.4]{R} $\widehat{\B(v)}$ is in some edge group of $B$ contradicting the hypothesis that $(\B, \Delta)$ is reduced. 
 But  $\nu^{-1}\nu\widehat{\B(v)}=\widehat{\B(v)}$ so  $\nu (\widehat{\B(v)})= \widehat{\G(w)}^g$ for some $g \in \G$. Therefore $(\tau_{g^{-1}} \nu)_{|\widehat{\B(v)}}$ is an isomorphism
to $\widehat{\G(w)}$.  So we can define $\varphi:V(\Gamma)\longrightarrow V(\Delta)$ by putting $\varphi(w)$ to be the unique vertex $v$ in $\Delta$ such that $\widehat{\G(w)}$ is conjugate to $\widehat{\B(v)}$.

The edge groups are finite subgroups that are intersections of two incident vertex groups. Therefore, every edge group  $\G(e)$ is isomorphic to a subgroup of some edge group  $\B(e')$ and vice versa. It follows that maximal edge groups of $G$ are isomorphic to maximal edge groups of $B$ and vice versa.  Now factoring out normal closures of all vertex groups in $\widehat G$ and $\widehat B$ we obtain that $\widehat\pi_1(\Gamma)\cong\widehat \pi_1(\Delta)$ (see \cite[Corollary 3.9.3]{R}). But $|E(\Gamma)|=|V(\Gamma)|+rank(\pi_1(\Gamma))-1=|V(\Delta)|+ rank(\pi_1(\Delta))-1=|E(\Delta)|$ (see \cite[Proposition 3.5.3 (b)]{R}). Therefore $|E(\Gamma)|=|E(\Delta)|$
and we can define a bijection $\epsilon:E(\Gamma)\longrightarrow E(\Delta)$ such that for every  maximal edge group $\G(e)$ one has   $\G(e)\cong \B(\epsilon(e))$.

\medskip
Now we shall argue by induction on the maximal order $n$ of an edge group of $(\G,\Gamma)$. From now on we think of $\widehat G=\widehat B$ as the same group and think of $G,B$ as dense subgrous of it.

 If all edge groups are trivial, then the result follows from two preceding paragraphs that gives the base of induction.  Let $\Omega$  be the  subgraph of $\Gamma$ such that $\G(e)$  have order $n$ for each $e\in \Omega$.  For a connected component $C$ of $\Omega$ we denote by $(\G,C)$  the subgraph of groups of $(\G,\Gamma)$ restricted to $C$. We collapse each connected component $C$ of $\Omega$ to a vertex and put on the obtained vertex $v_C$ the fundamental group $\pi_1(\G,C)$   leaving the rest of edge and vertex groups unchanged. Let $(\overline\G,\overline\Gamma)$ be the obtained graph of groups.  Then $G=\pi_1(\overline\G, \overline \Gamma)$   and similar we obtain splitting of $B$ as the fundamental group $\pi_1(\overline \B,\overline\Delta)$.  The edge groups of these graphs of groups have order less than $n$. Let $\overline S(\widehat G), \overline S(\widehat B)$ be the standard profinite trees associated with $(\overline\G,\overline\Gamma)$ and $(\overline \B,\overline\Delta)$ respectively.  By Lemma \ref{relative main} the profinite completion $\widehat{\G(v_C)}$  of the vertex group  of each  collapsed connected component fixes a vertex in the standard profinite tree $\overline S(\widehat B)$ and vice versa.   Hence by the induction hypothesis there is a bijection $\bar\epsilon:E(\overline\Gamma)\longrightarrow E(\overline\Delta)$ such that for every edge group $\overline\G(e)$ of $(\overline\G,\overline\Gamma)$ one has   $\overline \G(e)\cong \overline\B(\bar\epsilon(e))$. Now combining $\overline\epsilon$ with $\epsilon$  we obtain the result.
\end{proof}

{\it Proof of Theorem \ref{JSJ-decomposition}}.  We identify $\widehat G$ with $\widehat B$ and view $G$ and $B$ as dense subgroups of $\widehat G$.  Since any vertex group $\B(v)$ is an $\Oe$-group,  $\widehat{\B(v)}$ is conjugate into $\widehat{\G(w)}$ for some $w\in V(\Gamma)$ and since any vertex group $\G(w)$ is an $\Oe$-group,  $\widehat{\G(w)}$ is conjugate into $\widehat{\B(v)}$ for some $v\in V(\Delta)$.  Hence the result follows from Proposition \ref{maximal edge groups}.

\bigskip


{\it Proof of Corollary \ref{free product}}.  Refining the free decomposition if necessary
and collecting free factors isomorphic to $\Z$
we can obtain $G=*_{j=1}^k G_j * F_0$,  where each $G_j$ is indecomposable into a free product group not isomorphic to $\Z$ and $F_0$ is a free group of finite rank.  Similarly we decompose   $B=*_{i=1}^l B_i* F$.  Then $\widehat G=\amalg_{j=1}^k \widehat G_j \amalg \widehat F_0$, and   $\widehat B=\amalg_{i=1}^l \widehat B_i\amalg \widehat F$.  We look at theses decompositions as profinite fundamental groups of graph of groups, with $G_i,B_j$ vertex groups and generators  of $\widehat F_0$ and $\widehat F$ representing loops of the underlying graphs.  Fix an isomorphism $\nu:\widehat G\longrightarrow \widehat B$.  Let $S(\widehat G)$ and $S(\widehat B)$ be the standard profinite trees on which they act respectively.
 Note that ${\nu(G_i)}$ is either an OE-group (and so is a $\widehat{OE}$-group) or splits  as a free product with amalgamation or as HNN-extension over a non-trivial finite group and so  by Lemma \ref{relative main} its profinite completion $\nu(\widehat G_i)=\widehat{\nu(G_i)}$  fixes a vertex in the standard profinite tree $S(\widehat B)$. Hence  ${\nu(\widehat G_i)}$  is conjugate into some $\widehat B_j$.  Similarly,  $\nu^{-1}(\widehat B_j)$   fixes a vertex in the standard profinite tree $S(\widehat G)$ and therefore is conjugate into some $\widehat G_k$.  Then by Proposition \ref{maximal edge groups}  $l=k$ and $\widehat G_i\cong \widehat B_j$ up to renumeration.  It remaines to observe that $\nu$ induces an isomorphism $$\bar\nu:\widehat G/\langle\langle \widehat G_i\mid i=1,  \ldots, k\rangle\rangle\cong \widehat F_0\longrightarrow \widehat B/\langle\langle \widehat B_j\mid j=1,\ldots, k\rangle\rangle\cong \widehat F.$$  Hence $rank(F)=rank(F_0)$ as needed.  

\bigskip

{\it Proof of Theorem \ref{multiplicative}}. We may assume that $G_i$ are indecomposable free factors of $G$. By Proposition \ref{closed for constructions} $G\in \A$.  Let $B\in \A$ be a group such that $\widehat B\cong \widehat G$. By Corollary \ref{free product} $B=*_{i=1}^n  B_i$ with $\widehat B_i=\widehat G_i$. The number of isomorphism classes of such $B_i$ is exactly $g(G_i,A)$ for each $i=1, \ldots, n$, so $g(G,\A)=g(G_1,\A)\cdots g(G_n,\A)$. 

\bigskip

Since finitely generated torsion free nilpotent groups of class 2 and Hirsch length $\leq 5$ are determined by their profinite completion (see [GS]) we deduce the following 

\begin{cor}\label{nilpotent} If $G_i$ are   finitely generated torsion free nilpotent groups of class 2  and Hirsch length $\leq 5$ then $g(G,\A)=1$.

\end{cor}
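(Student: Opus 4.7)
The plan is to reduce the statement to the theorem of Grunewald--Segal cited as [GS] by means of Theorem \ref{multiplicative}. First I would check that each $G_i$ lies in $\A$. Any finitely generated torsion-free nilpotent group $G_i$ satisfies the nilpotency identity, so $\widehat{G_i}$ satisfies the same identity and in particular contains no non-abelian free pro-$p$ subgroup. Proposition \ref{virtually cyclic} then forces $\widehat{G_i}$ to be an $\Oe$-group unless $G_i$ is virtually infinite cyclic, in which case $G_i\cong\Z$ and we just use the HNN-splitting of $\Z$ over the trivial group together with the (trivial) membership $1\in\A$. Finite presentability of $G_i$ gives accessibility. Hence $G_i\in\A$, and by Proposition \ref{closed for constructions} the free product $G=*_{i=1}^n G_i$ itself lies in $\A$.

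Next I would note that a non-trivial finitely generated nilpotent group cannot be a non-trivial free product, so each $G_i$ is already indecomposable as a free product and $G=*_{i=1}^n G_i$ is a Grushko decomposition. Applying Theorem \ref{multiplicative} yields
\[
g(G,\A)=\prod_{i=1}^n g(G_i,\A),
\]
and the claim reduces to showing $g(G_i,\A)=1$ for every $i$.

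For this, let $B\in\A$ with $\widehat B\cong\widehat{G_i}$. The plan is to argue that $B$ itself is finitely generated, torsion-free nilpotent of class at most $2$, and of the same Hirsch length as $G_i$, so that the hypotheses of [GS] are met and $B\cong G_i$ follows. Since $B$ is residually finite it embeds in $\widehat B\cong\widehat{G_i}$, which is nilpotent of class $\leq 2$ and torsion-free (using the classical fact that the profinite completion of a finitely generated torsion-free nilpotent group is torsion-free, via its embedding into its pro-$p$ completions). Thus $B$ inherits these properties. The Hirsch length of $G_i$ can be read off from the profinite abelianization and the abelianized commutator subgroup (both invariants of $\widehat{G_i}$), so $B$ has Hirsch length equal to $h(G_i)\leq 5$. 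Invoking [GS] gives $B\cong G_i$, so $g(G_i,\A)=1$ and the corollary follows.

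The main obstacle, minor but the only non-formal point, is the passage from profinite to abstract data: verifying that $B$, a priori only known to be a dense subgroup of $\widehat{G_i}$ lying in $\A$, must genuinely be nilpotent of class $\leq 2$, torsion-free, and of Hirsch length $\leq 5$, so that [GS] applies. Everything else is bookkeeping with Theorem \ref{multiplicative} and the closure properties of $\A$.
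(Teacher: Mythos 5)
Your proposal is correct and takes essentially the same route as the paper, which obtains the corollary by combining Theorem \ref{multiplicative} with the Grunewald--Scharlau result [GS] that such nilpotent groups are determined by their profinite completions. Your extra verifications (that each $G_i$ lies in $\A$, and that any $B\in\A$ with $\widehat B\cong\widehat{G_i}$ is itself finitely generated, torsion-free, nilpotent of class $\leq 2$ and of the same Hirsch length, so that [GS] applies) simply spell out details the paper leaves implicit.
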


\begin{rem} In \cite{BMRS20}, \cite{BMRS18} was proved that certain triangle groups  and certain 3-manifold groups  are profinitely rigid. Hence the free factor of Corollary \ref{nilpotent} can be also these groups with the same conclusion.

\end{rem}

vagnerbessa@ufv.br

ander.porto@ict.ufvjm.edu.br

pz@mat.unb.br
\end{document}